\newcommand{\id}{\operatorname{Id}}
\newcommand{\ignore}[1]{}
\newtheorem{theorem}{Theorem}
\newtheorem{corollary}[theorem]{Corollary}
\newtheorem{lemma}[theorem]{Lemma}
\newtheorem{proposition}[theorem]{Proposition}
\theoremstyle{definition}
\newtheorem{definition}[theorem]{Definition}
\newtheorem{remark}[theorem]{Remark}
\newtheorem{assumption}{Assumption}
\newcounter{case}
\newenvironment{case}[1][Case \thecase]{\refstepcounter{case}  \begin{trivlist}
\item[\hskip \labelsep {\bfseries #1}]}{\end{trivlist}}
\numberwithin{equation}{section}
\numberwithin{theorem}{section}
\newcommand{\m}{\mathbb}
\author{Bruno Ziliotto\thanks{Paris Dauphine University, Place du Mar\'echal de Lattre de Tassigny, 75016 Paris,
France. \newline E-mail: ziliotto@math.cnrs.fr} }
\title{Tauberian theorems for general iterations of operators: applications to zero-sum stochastic games}
\begin{document}
\date{}
\maketitle
\bibliographystyle{plain}
\begin{abstract}
This paper proves several Tauberian theorems for general iterations of operators, and provides two applications to zero-sum stochastic games where the total payoff is a weighted sum of the stage payoffs. The first application is to provide conditions under which the existence of the asymptotic value implies the convergence of the values of the weighted game, as players get more and more patient. The second application concerns stochastic games with finite state space and action sets. This paper builds a simple class of asymptotically optimal  strategies in the weighted game, that at each stage play optimally in a discounted game with a discount factor corresponding to the relative weight of the current stage.
\end{abstract}
\section*{Introduction}
Zero-sum stochastic games were introduced by Shapley \cite{SH53}. In this model, two players repeatedly play a zero-sum game, that depends on an endogenous variable called \textit{state of nature}. At each stage, players choose a pair of actions. Actions and the current state determine both the stage payoff and the distribution according to which the new state is drawn. At the end of the stage, the new state of nature and the pair of actions played are publicly announced to both players.
\\
There are several ways to evaluate the total payoff in a stochastic game. For $n \in \m{N}^*$, the payoff in the $n-stage \ game$ is the Cesaro mean $\frac{1}{n} \sum_{m=1}^n g_m$, where $g_m$ is the payoff at stage $m \geq 1$. For $\lambda \in (0,1]$, the payoff in the $\lambda-discounted \ game$ is the Abel mean $\sum_{m \geq 1} \lambda(1-\lambda)^{m-1} g_m$. More generally, for a sequence of positive weights $(\theta_m)_{m \geq 1}$ summing to 1, the payoff in the $\theta$-weighted game is the weighted sum $\sum_{m \geq 1} \theta_m g_m$. Under mild conditions, the $n$-stage game, the $\lambda$-discounted game and the $\theta$-weighted game have a value, denoted respectively by $v_n$, $v_{\lambda}$ and $v_{\theta}$. 

A huge part of the literature focuses on the existence of the limit of $v_{n}$ when $n$ goes to infinity, of the limit of $v_{\lambda}$ when $\lambda$ goes to 0, and more generally of the limit of $v_{\theta}$ when $\sup_{m \geq 1} \theta_m$ goes to 0. When the state space and the action sets are finite, Bewley and Kohlberg have proved that $(v_n)$ and $(v_{\lambda})$ converge to the same limit, and a direct consequence of the existence of the uniform value by Mertens and Neyman \cite{MN81} is that more generally, $(v_{\theta})$ converges when $(\theta_m)_{m \geq 1}$ is decreasing and $\theta_1$ goes to 0. Without this finiteness assumption, several counterexamples have been pointed out. Vigeral \cite{vigeral13} has provided an example of a stochastic game with finite state space and compact action sets in which neither $(v_n)$ nor $(v_{\lambda})$ converges. Ziliotto \cite{Z13} has provided an example of a stochastic game with compact state space, finite action sets and many other regularity properties, in which $(v_n)$ and $(v_{\lambda})$ fail to converge. Many papers prove convergence of $(v_n)$, $(v_{\lambda})$ and $(v_{\theta})$ to the same limit in specific models (see for instance the recent surveys \cite{LS15}, \cite{JN16} and \cite{SZ16}). A natural question is to ask whether there is a link between the convergence of $(v_n)$, $(v_{\lambda})$ and $(v_{\theta})$\footnote{In a continuous-time framework, Oliu-Barton and Vigeral \cite{BV13} and Khlopin \cite{K14,K15} address this question in the optimal control and differential game setting.}. Ziliotto \cite{Z15} has proved that in a very general stochastic game model, with possibly infinitely many states and actions, $(v_n)$ converges uniformly if and only if $(v_{\lambda})$ converges uniformly\footnote{Values depend on the initial state, thus $(v_n)$ and $(v_{\lambda})$ map the state space to the reals. Note that if uniform convergence is replaced by pointwise convergence, such a result does not hold, even in the 1-Player case (see Lehrer and Sorin \cite{LS92}).}.
\\  
This paper aims at generalizing such a result to a more general family of values $(v_{\theta})$. Note that for recursive games (stochastic games in which the stage payoff is 0 in nonabsorbing states), Li and Venel \cite{LV16} have proved that if $(v_n)$ or $(v_\lambda)$ converges uniformly, then $(v_{\theta})$ converges uniformly when $\theta$ is decreasing and $\theta_1$ goes to 0. In a dynamic programming framework (one player), Monderer and Sorin \cite{MS93} have proved the convergence property, but for a more restricted class of decreasing evaluations. 
\\
The contribution of this paper is twofold. In the same stochastic game model as in \cite{Z15}, it provides conditions under which the uniform convergence of $(v_n)$ or $(v_{\lambda})$ implies the uniform convergence of $(v_{\theta})$.  
Second, in the case where the state space and the action sets are finite, it proves that the following \textit{discounted strategy} is asymptotically optimal in the $\theta$-weighted game, as $\theta_1$ goes to 0: at each stage $m \geq 1$, play optimally in the discounted game with discount factor $\theta_m/(\sum_{m' \geq m} \theta_{m'})$\footnote{Let us mention that in a recent independent work, Oliu-Barton has obtained similar results, using a different approach. These results will be published in another paper.}. Such a discount factor corresponds to the weight of stage $m$ relative to the weight of future stages. This result is new even when $\theta$ is a $n$-stage evaluation.
Finally, this paper provides an example that illustrates the sharpness of the first result.  

The proof of the results rely on the operator approach, introduced by Rosenberg and Sorin \cite{RS01}. This approach builds on the fact that the value of the $\theta$-weighted game satisfies a functional equation, called the Shapley equation (see \cite{SH53}). The properties of the associated operator can be exploited to infer convergence properties of $(v_{\theta})$ (see \cite{RS01}). This paper first proves several Tauberian theorems in an operator setting, and apply them to stochastic games to get the first result. Surprisingly, the proof of the second result also follows from a Tauberian theorem for operators. It is due to the fact that the payoff $w_{\theta}$ guaranteed by a discounted strategy in the $\theta$-weighted game satisfies a functional equation similar to a Shapley equation.

This paper is organized as follows. Sections 1 and 2 state and prove several Tauberian theorems for operators. Section 3 applies these theorems to stochastic games. Section 4 provides an example that shows the sharpness of the results.

\section{Operator approach: main results}
If $\left(C,\mathcal{C}\right)$ is a Borel subset of a Polish space, we denote by $\Delta(C)$ the set of probability measures on $C$, equipped with the weak$^*$ topology. In particular, the set $\Delta(\m{N}^*)$ identifies with the set of positive real sequences $(\theta_m)_{m \geq 1}$ such that $\sum_{m \geq 1} \theta_m=1$. Let $\mathcal{D}:=\left\{\theta \in \Delta(\m{N}^*) \ | \ \forall \ m \geq 1, \ \theta_{m+1} \leq \theta_m \right\}$. The sequence $\theta \in \m{R}^{\m{N}^*}$ such that $\theta_m=0$ for all $m \geq 1$ is denoted by  $\mathbf{0}$. 
\begin{definition} \label{shift}
Let $\theta \in \Delta(\m{N}^*) \cup \left\{\mathbf{0}\right\}$. 
\begin{itemize}
\item
The \textit{shift of $\theta$} is the sequence $\hat{\theta}$ defined by $\hat{\theta}:=\mathbf{0}$ if $\theta_1= 1$, and otherwise
\begin{equation*}
\forall \ m \in \m{N}^*, \quad \hat{\theta}_m:=\theta_{m+1}(1-\theta_1)^{-1}.
\end{equation*}
\item
Let $r \in \m{N}^*$. The \textit{$r$-shift} of $\theta$ is the sequence $\theta^r \in \Delta(\m{N}^*)$ defined recursively by $\theta^1=\theta$ and 
\begin{equation*}
\theta^r:=\widehat{\theta^{r-1}}.
\end{equation*}
Note that if $\theta^r \neq \mathbf{0}$, then for all $m \geq 1$, 
\begin{equation*}
\theta^r_m=\frac{\theta_{m+r-1}}{\sum_{m'\geq r} \theta_{m'}}.
\end{equation*}
\end{itemize}
\end{definition}
Let $(X, \left\| . \right\|$) be a Banach space, and a map $\Psi:[0,1] \times X \rightarrow X$ that satisfies 
\begin{equation} \label{nonexp}
\forall \lambda \in [0,1], \ \forall \ (f,g) \in X^2, \quad \left\|\Psi(\lambda,f)-\Psi(\lambda,g) \right\| \leq (1-\lambda)\left\|f-g\right\|,
\end{equation}
and
\begin{equation} \label{reg}
\exists C>0, \ \forall \lambda \in [0,1], \ \forall \ f \in X, \quad \left\|\Psi(\lambda,f) \right\| 
\leq C\lambda+(1-\lambda) \left\|f \right\|. 
\end{equation}
\begin{proposition} \label{existence}
There exists a unique family $(v_{\theta})_{\theta \in \Delta(\m{N}^*)\cup \left\{\mathbf{0}\right\}}$ such that: 
\\
$(v_{\theta})_{\theta \in \Delta(\m{N}^*)\cup \left\{\mathbf{0}\right\}}$ is bounded, and
\begin{equation} \label{recop}
\forall \ \theta \in \Delta(\m{N}^*) \cup \left\{\mathbf{0}\right\}, \quad v_{\theta}=\Psi(\theta_1,v_{\widehat{\theta}}).
\end{equation}
\end{proposition}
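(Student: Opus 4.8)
The plan is to construct $v_{\theta}$ as the limit of the finite iterates obtained by unfolding \eqref{recop} along the shift orbit $\theta=\theta^1,\theta^2,\theta^3,\dots$, and to drive the whole argument with one elementary computation: along this orbit the contraction factors telescope. For $\theta\in\Delta(\m{N}^*)$ and $f\in X$ set
\[
\Phi^{\theta}_{r}(f):=\Psi\bigl(\theta^1_1,\Psi(\theta^2_1,\dots,\Psi(\theta^r_1,f)\cdots)\bigr).
\]
Writing $\sigma_r:=\sum_{m\ge r}\theta_m$ (so $\sigma_1=1$), Definition \ref{shift} gives $\theta^k_1=\theta_k/\sigma_k$, hence $1-\theta^k_1=\sigma_{k+1}/\sigma_k$, and therefore
\[
\prod_{k=1}^{r}\bigl(1-\theta^k_1\bigr)=\frac{\sigma_{r+1}}{\sigma_1}=\sigma_{r+1}\xrightarrow[r\to\infty]{}0,
\]
the vanishing tail of a convergent series. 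This is the engine of both existence and uniqueness.

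For existence I would first note, using \eqref{reg}, that the closed ball $\overline{B}(0,C)$ is stable under every $\Psi(\lambda,\cdot)$: if $\|f\|\le C$ then $\|\Psi(\lambda,f)\|\le C\lambda+(1-\lambda)C=C$. Since $0$ lies in this ball, all iterates $\Phi^{\theta}_{r}(0)$ stay in $\overline{B}(0,C)$. To see that $\bigl(\Phi^{\theta}_{r}(0)\bigr)_r$ is Cauchy, I write $\Phi^{\theta}_{r+s}(0)=\Phi^{\theta}_{r}(h)$ with $h:=\Psi(\theta^{r+1}_1,\dots,\Psi(\theta^{r+s}_1,0)\cdots)\in\overline{B}(0,C)$; applying \eqref{nonexp} $r$ times peels off exactly the product above, so
\[
\bigl\|\Phi^{\theta}_{r+s}(0)-\Phi^{\theta}_{r}(0)\bigr\|\le\Bigl(\prod_{k=1}^{r}(1-\theta^k_1)\Bigr)\|h\|\le C\,\sigma_{r+1},
\]
uniformly in $s$. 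Hence $v_{\theta}:=\lim_{r}\Phi^{\theta}_{r}(0)$ exists, satisfies $\|v_{\theta}\|\le C$, and the recursion \eqref{recop} is obtained by letting $r\to\infty$ in $\Phi^{\theta}_{r}(0)=\Psi\bigl(\theta_1,\Phi^{\widehat{\theta}}_{r-1}(0)\bigr)$, using that $\Psi(\theta_1,\cdot)$ is continuous.

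For uniqueness, let $(v_{\theta})$ and $(v'_{\theta})$ be two bounded solutions and put $M:=\sup_{\theta}\|v_{\theta}-v'_{\theta}\|<\infty$. Iterating \eqref{recop} $r$ times and applying \eqref{nonexp} at each step yields $\|v_{\theta}-v'_{\theta}\|\le\bigl(\prod_{k=1}^{r}(1-\theta^k_1)\bigr)M=\sigma_{r+1}M$, which tends to $0$; thus $v_{\theta}=v'_{\theta}$ for every $\theta\in\Delta(\m{N}^*)$.

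The one place this machinery stalls — and the step I expect to be the main obstacle — is $\theta=\mathbf{0}$. There $\widehat{\mathbf{0}}=\mathbf{0}$ and $\theta^k_1=0$ for all $k$, so every contraction factor equals $1$, the telescoping collapses, and \eqref{recop} degenerates into the fixed-point equation $v_{\mathbf{0}}=\Psi(0,v_{\mathbf{0}})$ for the merely nonexpansive map $\Psi(0,\cdot)$. Existence remains easy, since \eqref{reg} at $\lambda=0$ forces $\Psi(0,0)=0$, so $v_{\mathbf{0}}:=0$ is a solution consistent with the limit construction (the iterates $\Phi^{\mathbf{0}}_{r}(0)$ are identically $0$). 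Uniqueness, however, cannot come from \eqref{nonexp} alone, since a nonexpansive map may possess many fixed points; this is the delicate point, and I anticipate it being settled by invoking the finer structure of $\Psi$ available in the intended applications rather than \eqref{nonexp}--\eqref{reg} in isolation.
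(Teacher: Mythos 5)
Your construction and proof for $\theta \in \Delta(\m{N}^*)$ coincide with the paper's: your $\Phi^{\theta}_r$ is exactly the paper's iterated operator $\Psi^r_{\theta}$, your telescoping identity $\prod_{k=1}^{r}(1-\theta^k_1)=\sum_{m\geq r+1}\theta_m$ is the paper's quantity $\Pi_{r+1}(\theta)$, and the Cauchy argument and the uniqueness argument are the same (the paper bounds the iterates by $C$ via its Lemma \ref{iterate} \ref{bounded} rather than by stability of the ball $\overline{B}(0,C)$, a cosmetic difference; also, your telescoping derivation implicitly assumes all tails $\sigma_k$ are positive, but for finitely supported $\theta$ the factor $1-\theta^n_1=0$ at the last supported index makes both sides vanish, so the identity still holds).

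The interesting point is the one you flag at $\theta=\mathbf{0}$, and there your instinct is sharper than the paper's own proof. Uniqueness at $\mathbf{0}$ genuinely does not follow from (\ref{nonexp})--(\ref{reg}); in fact it fails: take $X=\m{R}$ and $\Psi(\lambda,f):=(1-\lambda)f$, which satisfies both axioms. Then any bounded solution of (\ref{recop}) must have $v_{\theta}=0$ for every $\theta\in\Delta(\m{N}^*)$, but $v_{\mathbf{0}}=\Psi(0,v_{\mathbf{0}})$ holds for \emph{every} real number $v_{\mathbf{0}}$, so there are infinitely many bounded solutions on $\Delta(\m{N}^*)\cup\{\mathbf{0}\}$. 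The paper overlooks this because its Lemma \ref{iterate} rests on the identity $\prod_{r'=1}^{r-1}(1-\theta^{r'}_1)=\sum_{m\geq r}\theta_m$, which fails precisely at $\theta=\mathbf{0}$ (left side $1$, right side $0$); with the correct factor $1$, the uniqueness argument says nothing about $v_{\mathbf{0}}$. Note that your proposed escape route --- invoking finer structure of $\Psi$ in the applications --- does not work either: for a Shapley operator with a single state and $P_{k,s,t}=\id$, the map $\Psi(0,\cdot)$ is the identity, so it has many fixed points. The correct repair is normalization, not structure: since (\ref{reg}) forces $\Psi(0,0)=0$, define $v_{\mathbf{0}}:=0$ (this is what the limit construction produces) and state uniqueness either among bounded families with $v_{\mathbf{0}}=0$, or only over $\Delta(\m{N}^*)$. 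The restriction to $\Delta(\m{N}^*)$ is unambiguous even for finitely supported $\theta$, whose recursion reaches $\mathbf{0}$ through the evaluation with $\theta_1=1$, because (\ref{nonexp}) at $\lambda=1$ makes $\Psi(1,\cdot)$ constant, so the ambiguity at $\mathbf{0}$ never propagates; and nothing later in the paper uses $v_{\mathbf{0}}$ itself. In short: your proof is correct wherever it reaches a conclusion, and the step you could not close is not closable as stated --- it is a genuine (though harmless and easily repaired) defect of the proposition and of the paper's proof.
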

  When $\theta$ is taken among decreasing sequences, this proposition is a direct consequence of Neyman and Sorin \cite[Subsection 3.2]{NS10}).
\begin{remark} \label{rkshapley}
A class of operators that play a key role in this paper is the following. Let $K$ be any set, and $X$ be the set of bounded real-valued functions defined on $K$. Consider two sets $S$ and $T$, and a family of linear forms $(P_{k,s,t})_{(k,s,t) \in K \times S\times T}$ on $X$, such that for all $(k,s,t)$, $P_{k,s,t}$ is of norm smaller than 1. Let $g:K \times S \times T \rightarrow \m{R}$ be a bounded function. Define $\Psi :[0,1] \times X \rightarrow X$ by $\Psi(\lambda,f)(k):=\sup_{s \in S} \inf_{t \in T} \left\{\lambda g(k,s,t)+(1-\lambda) P_{k,s,t}(f) \right\}$, for all $f \in X$ and $k \in K$. Note that $\Psi$ satisfies (\ref{nonexp}) and (\ref{reg}). This class includes \textit{Shapley operators} (see Neyman and Sorin \cite[p.397-415]{NS03b}): this corresponds to the case where $K$ is the state space of some zero-sum stochastic game, $S$ (resp. $T$) is the set of strategies of Player 1 (resp. 2), $k$ is the current state, and $P_{k,s,t}(f)$ is the expectation of $f(k')$ under strategies $s$ and $t$, where $k'$ is the state at next stage. Under standard assumptions, for all $\theta \in \Delta(\m{N}^*)$, $v_{\theta}$ is the value of the stochastic game in which the weight of the payoff at stage $m$ is $\theta_m$, for all $m \geq 1$. This fact will be useful in Section \ref{app}.
\end{remark}
\begin{remark} \label{rknonexp}
Consider a 1-Lipschitz mapping $\Phi: X \rightarrow X$ and 
\\
define $\Psi(\lambda,f):=\lambda \Phi(\lambda^{-1}(1-\lambda) f)$, for all $\lambda \in (0,1]$ and $f \in X$. Extend $\Psi$ to $[0,1] \times X$ by continuity. The operator $\Psi$ satisfies (\ref{nonexp}) and (\ref{reg}) for $C=\left\|\Phi(0)\right\|$. Thus, the framework considered in this paper is slightly more general than the nonexpansive operator setting that is standard in the literature. This more general setting will be useful in Subsection \ref{secfrac}. 
\end{remark}

We emphasize two particular families of evaluations $\theta \in \Delta(\m{N}^*)$:
\begin{itemize}
\item
For $n \in \m{N}^*$, the element $v_n \in X$ is defined by $v_n:=v_{\theta(n)}$, where $\theta(n) \in \mathcal{D}$ is defined in the following way: for all $m \in \m{N}^*$, $\theta(n)_m=1_{m \leq n} n^{-1}$.
\item
For $\lambda \in (0,1]$, the element $v_{\lambda} \in X$ is defined by $v_{\lambda}:=v_{\theta(\lambda)}$, where $\theta(\lambda) \in \mathcal{D}$ is defined in the following way: for all $m \in \m{N}^*$, 
$\theta(\lambda)_m=\lambda(1-\lambda)^{m-1}$.
\end{itemize}
By definition, we have
\begin{equation*}
v_{\lambda}=\Psi(\lambda,v_{\lambda}),
\end{equation*}
and
\begin{equation*}
v_n=n^{-1} \Psi\left(\frac{1}{n},v_{n-1}\right).
\end{equation*}
Some of the results will be stated under the following assumption, which is stronger than (\ref{reg}):
\begin{assumption} \label{assop}
There exists $C>0$ such that for all $\alpha,\beta \in [0,1]$, for all $\lambda, \lambda' \in  [0,1]$, 
for all $f,g \in X$,
\begin{equation*}
\left\| \alpha \Psi(\lambda,f)-\beta \Psi(\lambda',g) \right\| \leq C \left| \alpha \lambda-\beta\lambda' \right|
+\left\|\alpha(1-\lambda) f-\beta(1-\lambda') g\right\|.
\end{equation*}
\end{assumption}
\begin{remark}
Shapley operators (see Remark \ref{rkshapley}) satisfy Assumption \ref{assop}.
\end{remark}
The following proposition stems from the proof of Ziliotto \cite[Theorem 1.2]{Z15} (for further details, see Section \ref{secproof}).
\begin{proposition} \label{taub}
Under Assumption 1, the following statements are equivalent:
\begin{enumerate}[(a)]
\item The sequence $(v_n)_{n \geq 1}$ converges when $n$ goes to infinity.
\item The mapping $\lambda \rightarrow v_{\lambda}$ has a limit when $\lambda$ goes to 0.
\end{enumerate}
Moreover, when these statements hold, we have $\lim_{n \rightarrow+\infty} v_n=\lim_{\lambda \rightarrow 0} v_{\lambda}$. 
\end{proposition}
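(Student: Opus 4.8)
The plan is to work entirely from the two fixed-point relations recorded above, $v_\lambda=\Psi(\lambda,v_\lambda)$ and $v_n=\Psi(n^{-1},v_{n-1})$ with $v_0:=v_{\mathbf 0}$ (the latter read off Proposition \ref{existence} with $\theta=\theta(n)$, using $\widehat{\theta(n)}=\theta(n-1)$), together with Assumption \ref{assop}. First I would record that both families are bounded by $C$: applying (\ref{reg}) to the fixed-point equations gives $\|v_\lambda\|\le C$ and, by induction on $n$, $\|v_n\|\le C$. The heart of the preparation is a pair of regularity (Tauberian-type) estimates. Setting $u_n:=nv_n$ and $d_n:=u_n-u_{n-1}$, Assumption \ref{assop} applied with $\alpha=n,\ \beta=n-1,\ \lambda=n^{-1},\ \lambda'=(n-1)^{-1},\ f=v_{n-1},\ g=v_{n-2}$ makes the constant term vanish (since $\alpha\lambda=\beta\lambda'=1$) and yields $\|d_n\|\le\|d_{n-1}\|$; as $\|d_1\|=\|v_1\|=\|\Psi(1,v_{\mathbf 0})\|\le C$, the sequence $(\|d_n\|)$ is nonincreasing and bounded by $C$, and $v_n=\tfrac1n\sum_{k=1}^n d_k$ is exactly the Cesàro average of $(d_k)$. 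A parallel use of Assumption \ref{assop} on $v_\lambda=\Psi(\lambda,v_\lambda)$ and $v_\mu=\Psi(\mu,v_\mu)$ gives the relative-Lipschitz bound $\|v_\lambda-v_\mu\|\le 2C|\lambda-\mu|/\max(\lambda,\mu)$, so the discounted family varies slowly on each geometric scale.

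Next I would compare the two evaluations for $n$ and $\lambda$ of the same order, guided by the elementary identity
\[ \sum_{m\ge1}\lambda(1-\lambda)^{m-1}a_m=\sum_{n\ge1}n\lambda^2(1-\lambda)^{n-1}\Big(\tfrac1n\sum_{m=1}^n a_m\Big), \]
which exhibits the Abel mean as a convex mixture of Cesàro means, with weights $w_n(\lambda)=n\lambda^2(1-\lambda)^{n-1}$ concentrating around $n\approx\lambda^{-1}$ as $\lambda\to0$. If $v_\lambda$ could be replaced by $\bar v_\lambda:=\sum_n w_n(\lambda)v_n$ up to a vanishing error, the whole statement would follow at once: as $\lambda\to0$ the mixture $\bar v_\lambda$ concentrates on large horizons, so convergence of $(v_n)$ would force convergence of $\bar v_\lambda$ and hence of $(v_\lambda)$, and symmetrically. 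The catch is that, because $\Psi$ is a $\sup\inf$ and not an affine map, $v_\lambda$ is \emph{not} the linear mixture $\bar v_\lambda$ — which, by the same identity applied to $(d_k)$, is merely the Abel mean of the increments $\sum_k\lambda(1-\lambda)^{k-1}d_k$ — and the error $\|v_\lambda-\bar v_\lambda\|$ need not vanish. The identity can therefore serve only as a heuristic, and the comparison must be carried out at the level of the operators, not of their fixed points.

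Accordingly, I would couple the two operator recursions directly: unfold $v_\lambda$ and $v_n$ stage by stage and match the constant schedule $(\lambda,\lambda,\dots)$ against the finite-horizon schedule $(n^{-1},(n-1)^{-1},\dots)$ by repeated application of Assumption \ref{assop}, reparametrized through the tail-sum weights $\theta^r_1$ of Definition \ref{shift}. Assumption \ref{assop} is exactly what licenses this: it controls $\|\alpha\Psi(\lambda,\cdot)-\beta\Psi(\lambda',\cdot)\|$ by the discrepancy $|\alpha\lambda-\beta\lambda'|$ of the stage weights plus that of the continuations, so the two unfoldings can be compared term by term and the errors accumulated. The increment estimate $\|d_n\|\le\|d_{n-1}\|\le C$ keeps this accumulated error under control across the $\approx\lambda^{-1}$ relevant stages, while the slow variation of $\lambda\mapsto v_\lambda$ lets one pass from a single matched pair $(n,\lambda)$ to the whole family. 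Combining these shows that $(v_n)_n$ and $(v_\lambda)_{\lambda\to0}$ share their limit points and that convergence of either forces convergence of the other to the common value; I would organize the bookkeeping as in the proof of \cite[Theorem 1.2]{Z15} recalled in Section \ref{secproof}.

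The main obstacle is precisely this transport of the Abel–Cesàro identity past the nonlinearity: bounding the cumulative error of the stage-by-stage operator comparison over a horizon growing like $\lambda^{-1}$ so that it still vanishes as $\lambda\to0$. Everything else — boundedness by $C$, the monotone increment estimate, and the relative-Lipschitz bound — is a direct consequence of Assumption \ref{assop} and (\ref{reg}); the difficulty is concentrated in making the operator-level comparison quantitative enough that the heuristic mixture bound survives the $\sup\inf$.
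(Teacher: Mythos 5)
Your preparatory estimates are correct but they are not where the theorem lives. The boundedness, the monotone increment bound $\|d_n\|\le\|d_{n-1}\|$ (after fixing a domain issue: Assumption \ref{assop} requires $\alpha,\beta\in[0,1]$, so you must apply it with $\alpha=1$, $\beta=(n-1)/n$, $\lambda=1/n$, $\lambda'=1/(n-1)$ and multiply through by $n$, not with $\alpha=n$, $\beta=n-1$), and the bound $\|v_\lambda-v_\mu\|\le 2C|\lambda-\mu|/\max(\lambda,\mu)$ all hold as you claim. But all three hold for \emph{every} operator satisfying Assumption \ref{assop} --- in particular for Vigeral's example \cite{vigeral13}, where neither $(v_n)$ nor $(v_\lambda)$ converges --- so no combination of them alone can yield the equivalence. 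The entire content of the proposition is the step you defer, and your proposal explicitly leaves it open (``the main obstacle'', ``the difficulty is concentrated in...''). A proof whose central step is an acknowledged obstacle is not a proof.

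Moreover, the route you sketch for that step would fail. Coupling the two recursions stage by stage through Assumption \ref{assop} produces exactly the estimate of Lemma \ref{iterate}~(iii): an error of the form $C\sum_{m=1}^n|\theta_m-\theta'_m|$ plus a tail term, i.e.\ essentially $C\|\theta(n)-\theta(\lambda)\|_1$. This does not vanish: for $\lambda=1/n$ one computes $\|\theta(n)-\theta(\lambda)\|_1=2(1-1/n)^n\rightarrow 2/e$, and for any other pairing of $n$ and $\lambda$ the $\ell^1$-distance between the uniform and geometric weights stays above a universal positive constant (a geometric distribution that is nearly flat on $\left\{1,\dots,n\right\}$ necessarily puts mass close to $1$ beyond $n$). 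So the accumulated error over the $\approx\lambda^{-1}$ relevant stages is of order a constant, not $o(1)$ --- which is precisely why this is a genuine Tauberian theorem and not a perturbation statement. The paper's proof does not attempt any such coupling: it observes that the proof of \cite[Theorem 1.2]{Z15} uses only the iterated-operator properties of \cite[Lemma 1]{Z15}, verifies (Lemma \ref{iterate}, with point (iii) being where Assumption \ref{assop} enters) that these properties persist in the present more general setting, and then invokes that proof verbatim; there, the convergence hypothesis on one of the two families is exploited in an essential, non-perturbative way, not merely to offset an $\ell^1$-discrepancy between evaluations. To complete your argument you would have to either reproduce that argument or supply a substitute for it; the estimates you establish do not get you there.
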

When $(a)$, or equivalently $(b)$ is satisfied, the common limit is called \textit{asymptotic value}, and the operator $\Psi$ is said to have an asymptotic value. One of the main goals of this paper is to determine under which conditions on the sequences of weights does the existence of the asymptotic value implies the convergence of $(v_{\theta})$, as $\sup_{m \geq 1} \theta_m$ goes to 0. This leads to consider the following family of sequences of weights:

\begin{definition}
Let $p \in \m{N}^*$. A sequence $\theta \in \Delta(\m{N}^*)$ is \textit{$p$-piecewise constant} if there exists
$a_1,...,a_p \in [0,1]$ and $m_1,...,m_{p+1} \in \m{N}^*$ such that $m_1=1$ and for all $p' \in \left\{1,...,p \right\}$,
for all $m \in \left\{m_{p'},...,m_{p'+1}-1\right\}$, $\theta_m=a_{p'}$, and $\theta_m=0$ for all $m \geq m_{p+1}$.
\end{definition}
For $p \in \m{N}^*$, denote ${\Theta}^p$ the set of sequences that are $p$-piecewise constant. Note that $\Theta^1=\left\{\theta(n), n \in \m{N}^* \right\}$, and that for $p \geq 2$, $\Theta^p$ is not included in $\mathcal{D}$. 
\\
For $\theta, \theta' \in \Delta(\m{N}^*)$, define
\begin{equation*}
\left\|\theta-\theta'\right\|_1:=\sum_{m \geq 1} |\theta_{m}-\theta'_m|.
\end{equation*}
For $\Theta \subset \Delta(\m{N}^*)$, denote $D(\theta,\Theta):=\inf_{\theta' \in\Theta} \left\|\theta-\theta' \right\|_1$ and
\\ 
$I_p(\theta):=\max \left\{\sup_{m \geq 1} \theta_m, D(\theta,\Theta^p) \right\}$. 
\\
In the stochastic game framework, the quantity $I_p(\theta)$ is a measure of the ``impatience" of players. When $I_p(\theta)$ is small, this means first that players are very patient, second that the weight that they put on each stage does not vary too much: it can make at most $p$ significant jumps.


\begin{theorem} \label{taubgen}
Suppose that Assumption \ref{assop} holds, and that $\Psi$ has an asymptotic value $v^*$.

Then for all $\epsilon>0$, for all $p \in \m{N}^*$, there exists $\alpha>0$ such that for all $\theta \in \Delta(\m{N}^*)$,
\begin{equation*}
 I_p(\theta) \leq \alpha \Rightarrow
\left\|v_{\theta}-v^* \right\|  \leq \epsilon.
\end{equation*}
\end{theorem}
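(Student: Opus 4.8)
The plan is to combine a Lipschitz estimate in the weight variable with a block-by-block telescoping that feeds on the known convergence $v_n \to v^*$ (Proposition \ref{taub}). Writing $S_r:=\sum_{m\ge r}\theta_m$ for the tail mass and using the recursion $v_{\theta^r}=\Psi(\theta^r_1,v_{\theta^{r+1}})$ with $\theta^r_1=\theta_r/S_r$, the first ingredient is the bound
\begin{equation*}
\|v_\theta-v_{\theta'}\|\le C\,\|\theta-\theta'\|_1 \qquad (\theta,\theta'\in\Delta(\m{N}^*)),
\end{equation*}
which I would obtain by applying Assumption \ref{assop} to the \emph{weighted} quantities $S_rv_{\theta^r}$ and $S'_rv_{\theta'^r}$: with $\alpha=S_r$, $\beta=S'_r$, $\lambda=\theta_r/S_r$, $\lambda'=\theta'_r/S'_r$ the assumption gives $\|S_rv_{\theta^r}-S'_rv_{\theta'^r}\|\le C|\theta_r-\theta'_r|+\|S_{r+1}v_{\theta^{r+1}}-S'_{r+1}v_{\theta'^{r+1}}\|$, and summing this telescoping inequality over $r$ (the remainder vanishes since the family is bounded, say by $M$, and $S_r,S'_r\to 0$) yields the claim.

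This estimate reduces the theorem to $p$-piecewise constant sequences: given $\theta$ with $I_p(\theta)\le\alpha$, I pick $\theta'\in\Theta^p$ with $\|\theta-\theta'\|_1\le 2D(\theta,\Theta^p)\le 2\alpha$; then $\|v_\theta-v_{\theta'}\|\le 2C\alpha$ and $\sup_m\theta'_m\le\sup_m\theta_m+\|\theta-\theta'\|_1\le 3\alpha$, so it suffices to show that $\|v_{\theta'}-v^*\|$ is small whenever $\theta'\in\Theta^p$ has small sup. So let $\theta\in\Theta^p$ have blocks $p'=1,\dots,p$, block $p'$ carrying constant weight $a_{p'}\le\alpha$ of length $\ell_{p'}$ over stages $m_{p'},\dots,m_{p'+1}-1$, and set $u_{p'}:=v_{\theta^{m_{p'}}}$ (value of the suffix starting at block $p'$), with $u_{p+1}:=v_{\mathbf 0}$. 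Let $\tilde u_{p'}$ be the value obtained by running block $p'$ on top of the terminal vector $v^*$ in place of $u_{p'+1}$. Telescoping the non-expansiveness (\ref{nonexp}) across the block shows the terminal condition is damped by $c_{p'}:=S_{m_{p'+1}}/S_{m_{p'}}$, so $\|u_{p'}-v^*\|\le c_{p'}\|u_{p'+1}-v^*\|+\mathrm{err}_{p'}$ with $\mathrm{err}_{p'}:=\|\tilde u_{p'}-v^*\|$. Unrolling and using that $\prod_{q<p'}c_q=S_{m_{p'}}$ telescopes while $\prod_{q\le p}c_q=0$, I reach the key inequality
\begin{equation*}
\|v_\theta-v^*\|=\|u_1-v^*\|\le\sum_{p'=1}^{p}S_{m_{p'}}\,\mathrm{err}_{p'}.
\end{equation*}

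It remains to bound $\mathrm{err}_{p'}$, and this is the main obstacle. The naive route — telescoping $\tilde u_{p'}$ against $v^*$ through the quantity $\eta(\lambda):=\|\Psi(\lambda,v^*)-v^*\|$ (which tends to $0$ as $\lambda\to0$ since $\eta(\lambda)\le 2\|v_\lambda-v^*\|$ by (\ref{nonexp}) and Proposition \ref{taub}) — gives $\mathrm{err}_{p'}\le\sum_{j}\sigma_j\,\eta(b_{p'}/\sigma_j)$, where the sum runs over the stages $j$ of the block, $\sigma_j$ is the remaining normalized mass, and $b_{p'}:=a_{p'}/S_{m_{p'}}$. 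This is harmless for \emph{short} blocks but blows up for \emph{long} ones, because it sums $\eta$ over many stages and cannot reproduce the Tauberian cancellation. For long blocks I would therefore invoke $v_n\to v^*$ as a black box: replacing the terminal $v^*$ by $v_N$ (cost $\le c_{p'}\|v^*-v_N\|$) and then extending the block uniformly turns $\tilde u_{p'}$ into the value of a genuine uniform evaluation $\theta(\ell_{p'}+N)$ up to a $\|\cdot\|_1$-error handled by the Lipschitz estimate, so that $\mathrm{err}_{p'}\le\rho(\ell_{p'})$ with $\rho(L)\to 0$ (choosing $N$ to balance the two errors and using Proposition \ref{taub}).

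Finally I would partition the $p$ terms of the sum: long blocks ($\ell_{p'}\ge L$) contribute $\le p\,\rho(L)$; short early blocks ($\ell_{p'}<L$, $S_{m_{p'}}\ge s_0$) have $b_{p'}/\sigma_j\le 2\alpha/s_0^2$, so $\mathrm{err}_{p'}\le L\,\omega(2\alpha/s_0^2)$ with $\omega(x):=\sup_{\lambda\le x}\eta(\lambda)\to 0$; and short late blocks ($S_{m_{p'}}<s_0$) are killed by their own telescoping weight $S_{m_{p'}}<s_0$. Fixing first $s_0$, then $L$, then $\alpha$ closes the estimate. The delicate point is precisely this three-parameter balancing, which is what forces the two conditions packaged into $I_p(\theta)$: patience ($\sup_m\theta_m$ small) tames the short blocks, while closeness to $\Theta^p$ bounds the number of blocks — hence the number of error terms — and lets the long-block bound, the only genuinely Tauberian step, carry the argument.
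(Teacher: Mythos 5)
Your proposal is sound and reaches the theorem by a genuinely different route from the paper's. The shared part is the opening: the $\ell^1$-Lipschitz bound $\left\|v_\theta-v_{\theta'}\right\|\le C\left\|\theta-\theta'\right\|_1$ (the paper gets it from Lemma \ref{iterate} \ref{it2} with $f=0$; your telescoping of Assumption \ref{assop} over the weighted vectors $S_rv_{\theta^r}$ is the same computation) and the reduction to $\Theta^p$. After that the paper proceeds by \emph{induction on $p$}: it peels off the first block, compares $v_\theta$ to $v_{n_1}$ with $n_1=\lfloor 1/a_1\rfloor$ via Lemma \ref{iterate} \ref{it1} and \ref{it2}, and closes with a dichotomy --- either the first block carries mass at least $1-\epsilon/4C$, so the damping factor $(1-(m_2-1)a_1)$ kills the remainder, or else both the shifted evaluation (by the induction hypothesis) and $v_{n_1-m_2+1}$ (by convergence of $(v_n)$) are near $v^*$. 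You avoid induction entirely: your telescoped inequality $\left\|v_\theta-v^*\right\|\le\sum_{p'}S_{m_{p'}}\mathrm{err}_{p'}$ is correct (the damping of each block is $c_{p'}=S_{m_{p'+1}}/S_{m_{p'}}$, the products telescope to $S_{m_{p'}}$, and the final product vanishes), and the per-block three-regime analysis then carries the load. Your long-block step --- extend the block into a genuine uniform evaluation and apply the Lipschitz bound --- is essentially the paper's block-versus-$\theta(n_1)$ comparison in local form; but your short-block step via $\eta(\lambda)=\left\|\Psi(\lambda,v^*)-v^*\right\|\le2\left\|v_\lambda-v^*\right\|$ has no counterpart in the paper, and it quietly uses the discounted half of the hypothesis, which Proposition \ref{taub} legitimately supplies under Assumption \ref{assop}. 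The trade-off: the paper's induction is shorter, needs only $(v_n)\to v^*$, and hides the inter-block bookkeeping in the induction hypothesis; your version makes the error decomposition explicit and quantitative, at the price of the three-parameter balancing of $s_0$, $L$, $\alpha$. Two details to nail down in a full write-up: in the long-block step $N$ is not a free parameter --- the $\ell^1$ distance to $\theta(\ell_{p'}+N)$ is small only when $N\approx 1/b_{p'}-\ell_{p'}$ --- and when this forced $N$ is small you must absorb $\left\|v^*-v_N\right\|$ using the damping $c_{p'}\le (N+1)b_{p'}\le (N+1)/\ell_{p'}$ rather than the convergence of $(v_n)$; and your short-block bound $b_{p'}/\sigma_j\le 2\alpha/s_0^2$ needs $\alpha$ small compared with $s_0/L$ so that $\sigma_j\ge 1-\ell_{p'}b_{p'}$ stays bounded away from $0$, which your stated order of choices (first $s_0$, then $L$, then $\alpha$) does provide.
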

As an example, let us give a family of evaluations to which the theorem applies.
\\
Let $p \in \m{N}^*$. A sequence $\theta \in \Delta(\m{N}^*)$ is \textit{$p$-piecewise-discounted} if there exists
$a_1,...,a_p \in \m{R}_+$, $\lambda_1,...,\lambda_p \in [0,1]$ and $m_1,...,m_p \in \m{N}^*$ such that $m_1=1$ and for all $p' \in \left\{1,...,p-1\right\}$,
for all $m \in \left\{m_{p'},...,m_{p'+1}-1\right\}$, $\theta_m=
a_{p'} (1-\lambda_{p'})^{m-m_{p'}}$, and for all $m \geq m_p$, $\theta_m=a_{p} (1-\lambda_{p})^{m-m_{p}}$. Denote $\Theta^p_d$ the set of sequences that are $p$-piecewise-discounted. Note that $\Theta^1=\left\{\theta(\lambda), \lambda \in (0,1] \right\}$, and that $\Theta^p \subset \Theta^{p+1}_d$. 
\begin{corollary} \label{cor}
Suppose that Assumption \ref{assop} holds, and that $\Psi$ has an asymptotic value $v^*$. Then for all $\epsilon>0$, for all $p \in \m{N}^*$, there exists $\alpha>0$ such that for all $\theta \in \Theta^p_d$,
\begin{equation*}
 \sup_{m \geq 1} \theta_m \leq \alpha \Rightarrow
\left\|v_{\theta}-v^* \right\|  \leq \epsilon.
\end{equation*} 
\end{corollary}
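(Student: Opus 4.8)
The plan is to bridge between the piecewise-discounted class $\Theta^p_d$ and the piecewise-constant class $\Theta^q$, to which Theorem \ref{taubgen} applies directly, and to exploit the elementary split
\[
\left\|v_{\theta}-v^*\right\| \le \left\|v_{\theta}-v_{\theta'}\right\| + \left\|v_{\theta'}-v^*\right\| ,
\]
where $\theta' \in \Theta^q$ is an $L^1$-approximation of $\theta$. It is worth noting why one cannot simply feed $\theta$ into Theorem \ref{taubgen} through $I_q(\theta)$: for fixed $q$ and $\theta=\theta(\lambda)\in\Theta^1_d$, the distance $D(\theta(\lambda),\Theta^q)$ does \emph{not} tend to $0$ as $\lambda\to 0$, since a geometric tail cannot be approximated in $L^1$ by a bounded number of constant pieces; and letting $q$ grow with the desired accuracy would force the approximation accuracy to beat the threshold $\alpha$ that Theorem \ref{taubgen} produces for that same $q$, which is not under control. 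Routing through $\theta'$ together with a continuity estimate avoids this circularity.

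The first ingredient is an $L^1$-Lipschitz estimate for $\theta\mapsto v_{\theta}$: there is a constant $C'$, depending only on $\Psi$, with $\left\|v_{\theta}-v_{\theta'}\right\| \le C'\left\|\theta-\theta'\right\|_1$ for all $\theta,\theta'\in\Delta(\m{N}^*)$. I would prove this by unfolding the recursion (\ref{recop}) as $v_{\theta}=\Psi(\theta^1_1,\Psi(\theta^2_1,\dots))$, where $\theta^r$ is the $r$-shift and $\theta^r_1=\theta_r/\sigma_r$ with $\sigma_r:=\sum_{m\ge r}\theta_m$. The accumulated contraction factor after $r$ stages is $\prod_{j=1}^{r}(1-\theta^j_1)=\sigma_{r+1}\to 0$, so the unfolding converges. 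Comparing the two unfoldings stage by stage with Assumption \ref{assop}, applied with $\alpha=\sigma_r$, $\beta=\sigma'_r$, $\lambda=\theta^r_1$, $\lambda'=(\theta')^r_1$ so that $\alpha\lambda=\theta_r$ and $\beta\lambda'=\theta'_r$, the $r$-th stage contributes exactly $C\,\left|\alpha\lambda-\beta\lambda'\right|=C\,|\theta_r-\theta'_r|$, while the residual term $\left\|\sigma_{r+1}v_{\theta^{r+1}}-\sigma'_{r+1}v_{(\theta')^{r+1}}\right\|$ has the same form at the next stage and vanishes in the limit by boundedness. Summing yields the estimate with $C'=C$. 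This is the step I expect to be the main obstacle, and it is precisely where the full strength of Assumption \ref{assop}, beyond (\ref{reg}), is needed.

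The second ingredient is an approximation lemma: for every $\eta>0$ there is an integer $q=q(p,\eta)$ such that every $\theta\in\Theta^p_d$ admits $\theta'\in\Theta^q$ with $\left\|\theta-\theta'\right\|_1\le\eta$ and $\sup_m\theta'_m\le 2\sup_m\theta_m$. I would treat each of the $p$ geometric blocks $a(1-\lambda)^{\,\cdot}$ of $\theta$ separately: discard the tail on which the value has dropped below $(\eta/2)a$, which removes at most an $\eta$-fraction of that block's mass, and partition the remaining indices into consecutive groups on which the value varies by a factor at most $1+\eta$, replacing $\theta$ by its maximal value on each group. The number of groups is bounded by a quantity $K(\eta)$ depending only on $\eta$ — crucially not on $a$ or $\lambda$ — so the total number of pieces is $q\le p\,K(\eta)$. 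Rescaling to restore total mass $1$ keeps $\theta'$ in $\Theta^q$, costs only an $O(\eta)$ additional $L^1$ error, and the maximal-value choice gives the control on $\sup_m\theta'_m$.

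With these two ingredients the quantifiers line up without circularity. Given $\epsilon>0$ and $p\in\m{N}^*$, first set $\eta:=\epsilon/(2C')$, which fixes $q=q(p,\eta)$. Apply Theorem \ref{taubgen} to the pair $(\epsilon/2,q)$ to obtain $\alpha_0>0$ such that $I_q(\sigma)\le\alpha_0$ implies $\left\|v_{\sigma}-v^*\right\|\le\epsilon/2$. Finally set $\alpha:=\alpha_0/2$, and let $\theta\in\Theta^p_d$ satisfy $\sup_m\theta_m\le\alpha$. Choosing $\theta'\in\Theta^q$ from the approximation lemma, we have $D(\theta',\Theta^q)=0$, hence $I_q(\theta')=\sup_m\theta'_m\le 2\sup_m\theta_m\le\alpha_0$, so $\left\|v_{\theta'}-v^*\right\|\le\epsilon/2$; and the Lipschitz estimate gives $\left\|v_{\theta}-v_{\theta'}\right\|\le C'\eta=\epsilon/2$. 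The triangle inequality then delivers $\left\|v_{\theta}-v^*\right\|\le\epsilon$. The decisive point is that $\eta$, and hence $q$, is fixed in terms of $\epsilon$ and $C'$ \emph{before} $\alpha_0$ enters: the accuracy of the approximation is converted into value-error through the Lipschitz bound, and only $\sup_m\theta'_m$ — controlled by $\alpha$ — must beat the threshold $\alpha_0$.
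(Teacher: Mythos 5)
Your proposal is correct and follows essentially the same route as the paper: the paper's proof of Corollary \ref{cor} likewise combines the $L^1$-Lipschitz estimate (its Lemma \ref{comp}, proved exactly by the unfolding/iteration argument you describe, which is where Assumption \ref{assop} enters) with an approximation of each geometric block by boundedly many constant pieces of controlled number, and then concludes via Theorem \ref{taubgen} and the triangle inequality. The only difference is explicitness: the paper merely asserts that the approximation property suffices ``thanks to Lemma \ref{comp}'', whereas you spell out the control $\sup_m \theta'_m \le 2\sup_m \theta_m$ on the approximant and the quantifier ordering ($\eta$ and $q$ fixed before $\alpha_0$) that makes the argument non-circular.
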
 
As we shall see in Section \ref{secex}, this result does not hold when one replaces $\Theta^p_d$ by $\mathcal{D}$. For such a result to hold, additional assumptions on the family $(v_{\lambda})$ are required, as shown by the next theorem.
\begin{definition}
The family $(v_{\lambda})$ has \textit{bounded variation} if for all $\epsilon>0$, 
there exists $\beta \in (0,1]$ such that for all decreasing sequences 
$(d_r)_{r \geq 1} \in (0,\beta]^{\m{N}^*}$,  
\begin{equation*}
\sum_{r=1}^{+\infty} \left\|v_{d_{r+1}}-v_{d_r} \right\| \leq \epsilon.
\end{equation*}
\end{definition}
\begin{theorem} \label{taubbv}
Assume that one of the following assumptions holds:
\begin{enumerate}[(a)]
\item \label{bv1}
$(v_{\lambda})$ has bounded variation and Assumption 1 holds.
\item \label{bv2}
There exists $s>0$ and $\beta \in (0,1]$ such that for all $\lambda,\lambda' \in (0,\beta]$, 
\begin{equation*}
\left\|v_{\lambda}-v_{\lambda'}\right\| \leq C\left|\lambda^s-{\lambda'}^s \right|.
\end{equation*}
\end{enumerate}
Then $\Psi$ has an asymptotic value $v^*$. Moreover, for all $\epsilon>0$, there exists $\alpha>0$ such that for all $\theta \in \mathcal{D}$,
\begin{equation*}
 \sup_{m \geq 1} \theta_m \leq \alpha \Rightarrow
\left\|v_{\theta}-v^* \right\|  \leq \epsilon.
\end{equation*}
\end{theorem}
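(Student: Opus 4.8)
The plan is to establish the asymptotic value first, and then estimate $v_\theta$ for decreasing $\theta$ by unfolding the recursion (\ref{recop}) along the shifts $\theta^{r}$, comparing at each stage with the discounted fixed point.

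\textbf{Existence of $v^{*}$.} In case (\ref{bv1}), bounded variation forces $\lambda\mapsto v_\lambda$ to be Cauchy as $\lambda\to 0$: merging two null sequences into a single decreasing null sequence $(d_r)$ and applying the definition shows $\sum_r\|v_{d_{r+1}}-v_{d_r}\|\le\epsilon$ once all terms lie below the associated $\beta$, so $(v_\lambda)$ converges to some $v^{*}$. Proposition \ref{taub}, which rests on Assumption \ref{assop}, then gives $\lim_n v_n=v^{*}$, so $\Psi$ has asymptotic value $v^{*}$. In case (\ref{bv2}) the bound $\|v_\lambda-v_{\lambda'}\|\le C|\lambda^{s}-\lambda'^{s}|$ directly makes $(v_\lambda)$ Cauchy, hence convergent to some $v^{*}$; the estimate derived below, applied to the $n$-stage evaluations $\theta(n)\in\mathcal D$, yields $v_n\to v^{*}$ and thus the asymptotic value without recourse to Assumption \ref{assop}.

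\textbf{The shift recursion.} Fix a decreasing $\theta$ and set $x_r:=\sum_{m\ge r}\theta_m$ and $\lambda_r:=\theta^{r}_1=\theta_r/x_r$, so that (\ref{recop}) reads $v_{\theta^{r}}=\Psi(\lambda_r,v_{\theta^{r+1}})$, while $\prod_{j=1}^{r-1}(1-\lambda_j)=x_r$ because $1-\lambda_j=x_{j+1}/x_j$ and $x_1=1$. Comparing $v_{\theta^{r}}$ with the discounted value $v_{\lambda_r}=\Psi(\lambda_r,v_{\lambda_r})$ via (\ref{nonexp}), then using the triangle inequality and unfolding, gives
\[
\|v_\theta-v_{\lambda_1}\|\le\sum_{r\ge 1}x_{r+1}\,\|v_{\lambda_{r+1}}-v_{\lambda_r}\|=:\Sigma .
\]
Since $\lambda_1=\theta_1\le\alpha$ and $v_\lambda\to v^{*}$, the remaining discrepancy $\|v_{\lambda_1}-v^{*}\|$ is small, so everything reduces to bounding $\Sigma$ uniformly over decreasing $\theta$ with $\theta_1\le\alpha$.

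\textbf{Bounding $\Sigma$ and the main obstacle.} Let $M:=\sup_\theta\|v_\theta\|$, finite by (\ref{reg}), and let $\beta$ be furnished by the hypothesis. Split $\Sigma$ according to whether the discounts involved exceed $\beta$. A step with $\lambda_r>\beta$ forces $x_r<\theta_r/\beta\le\alpha/\beta$, so such steps occur only on the tail $\{r:x_r<\alpha/\beta\}$; bounding the increment by $2M$ and using $x_{r+1}<\theta_r/\beta$ yields a contribution at most $(2M/\beta)\sum_{r:\,x_r<\alpha/\beta}\theta_r\le 2M\alpha/\beta^{2}$, small once $\alpha\ll\beta^{2}$. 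On the remaining steps both discounts lie in $(0,\beta]$, where, writing $V(\lambda)$ for the total variation of $v$ on $(0,\lambda]$, one has $\|v_{\lambda_{r+1}}-v_{\lambda_r}\|\le|V(\lambda_{r+1})-V(\lambda_r)|$ with $V(\beta)\le\epsilon$.

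The crux is that $(\lambda_r)$ need not be monotone---already for piecewise-constant $\theta$ the relative discounts climb along each plateau and drop at each jump---so the bounded-variation hypothesis, stated for \emph{decreasing} sequences, cannot be applied to $(v_{\lambda_r})$ termwise. I would tame the oscillation by a weighted level-crossing estimate: for each level $\ell\le\beta$, the tail argument above bounds the $x_{r+1}$-weighted number of crossings of $\ell$ by $(\lambda_r)$, since every upward excursion above $\ell$ contracts $x_r$ by a factor $1-\ell$ and can begin only once $x_r<\alpha/\ell$; integrating this against $dV$ (of total mass $\le\epsilon$) controls the below-$\beta$ part of $\Sigma$. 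Securing this bound uniformly near $\lambda=0$, where individual crossings are cheap but may be numerous, is the main obstacle, and is precisely where Assumption \ref{assop} enters in case (\ref{bv1}): it allows the weighted and discounted trajectories to be compared with matched scalings, so that only a decreasing family of discount factors is effectively probed. In case (\ref{bv2}) no such device is needed, since the uniform Hölder modulus bounds $\|v_{\lambda_{r+1}}-v_{\lambda_r}\|\le C|\lambda_{r+1}^{s}-\lambda_r^{s}|$ for every pair of stages and lets the crossing estimate be carried out with an explicit modulus. Combining the two parts gives $\|v_\theta-v^{*}\|\le\Sigma+\|v_{\theta_1}-v^{*}\|\le\epsilon$ for all decreasing $\theta$ with $\theta_1\le\alpha$, as required.
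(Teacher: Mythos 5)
Your setup is sound, and it reproduces the paper's own starting point for case (\ref{bv2}): existence of $v^*$ follows as you say, and the unfolded estimate $\left\|v_\theta-v_{\lambda_1}\right\|\leq\sum_{r\geq 1}x_{r+1}\left\|v_{\lambda_{r+1}}-v_{\lambda_r}\right\|$ is a correct consequence of (\ref{recop}), (\ref{nonexp}) and the identity $\prod_{j\leq r}(1-\lambda_j)=x_{r+1}$. But the proof is not complete, and the hole is exactly the point you yourself call ``the main obstacle'': the non-monotonicity of $(\lambda_r)$ is never actually overcome. The level-crossing device fails quantitatively where it matters. A decreasing $\theta$ can make $(\lambda_r)$ oscillate around a level $\ell$ with amplitude of order $\ell^2$, and each such cycle contracts the weight $x_r$ only by about $(1-\ell)^2$, so the $x$-weighted number of crossings of level $\ell$ can be of order $1/\ell$. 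Feeding this into the co-area decomposition of $\Sigma$ gives a bound of the form $\int_0^\beta \ell^{-1}\,\mathrm{d}V(\ell)$, which bounded variation (total mass of $\mathrm{d}V$ at most $\epsilon$, possibly concentrated near $0$) does not control; even under the stronger hypothesis (\ref{bv2}), where $\mathrm{d}V(\ell)\sim \ell^{s-1}\mathrm{d}\ell$, this route produces $\int_0^\beta \ell^{s-2}\,\mathrm{d}\ell=+\infty$ for $s\leq 1$. Moreover, your claim that Assumption \ref{assop} repairs case (\ref{bv1}) (``matched scalings \dots only a decreasing family of discount factors is effectively probed'') is not an argument: in your single-shift unfolding, both operators at step $r$ are applied with the \emph{same} parameter $\lambda_r$, so only (\ref{nonexp}) is ever invoked and Assumption \ref{assop} contributes nothing; no mechanism is exhibited that restricts the probed discounts to a decreasing family.

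What is missing are the paper's two actual ideas for case (\ref{bv1}). First, instead of shifting one stage at a time, the paper groups stages into maximal blocks on which the shifted evaluation is $(1\pm\epsilon)$-approximated by a genuinely discounted one; Assumption \ref{assop} is what makes this usable, via Lemma \ref{iterate} \ref{it2}, which compares the weighted iteration over a block with the discounted fixed-point iteration at cost $2\epsilon C(1-\pi_{r+1}/\pi_r)$ -- this is where operators with \emph{different} weight sequences must be compared, something (\ref{nonexp}) alone cannot do. Second, a combinatorial lemma exploiting the monotonicity of $\theta$: at each block boundary, either $\lambda_{r+1}\leq\lambda_r$, or the remaining weight contracts, $\pi_{r+1}\leq(1+\epsilon)^{-1}\pi_r$; since the construction stops while $\pi_r\geq\epsilon$, the second alternative occurs at most $\epsilon^{-2}$ times, so $(\lambda_r)$ splits into at most $\epsilon^{-2}$ decreasing runs, and applying the bounded variation hypothesis at scale $\epsilon^3$ to each run bounds the sum by $\epsilon$. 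This ``almost decreasing'' dichotomy -- not a level-crossing count -- is the ingredient your proposal lacks. (Similarly, under (\ref{bv2}) the correct completion of your unfolding uses the structural constraint $\lambda_{r+1}\leq\lambda_r/(1-\lambda_r)$ forced by decreasing $\theta$, i.e.\ upward moves of $(\lambda_r)$ are of size $O(\lambda_r^2)$, rather than crossing counts.) Without these, the estimate of $\Sigma$, and hence the theorem, is not established.
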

\begin{remark}
The fact that Assumption \ref{bv1} or \ref{bv2} imply the existence of the asymptotic value was already known by \cite[Theorem C.8, p.177]{sorin02b}, or by Proposition \ref{taub}. Note also that when $\Psi$ is the Shapley operator of some stochastic game with finite state space and finite action sets, then \ref{bv1} and \ref{bv2} are satisfied (see \cite{BK76}).
\end{remark}


\section{Proofs} \label{secproof}
This section is dedicated to the proofs of the results. 
\subsection{Iterated operators}
Iterating Equation (\ref{recop}) in a proper way is one of the key ingredients to the proofs, thus we define the following family of iterated operators: 
\begin{definition}
Let $n \in \m{N}$ and $\theta \in \Delta(\m{N}^*) \cup \left\{\mathbf{0}\right\}$. The operator $\Psi^n_{\theta}: X \rightarrow X$ is defined recursively by $\Psi^{0}_{\theta}:=\id$, and for $n \geq 1$,
\begin{equation*}
\forall \ f \in X, \quad \Psi^n_{\theta}(f):=\Psi(\theta_1,\Psi^{n-1}_{\hat{\theta}}(f)).
\end{equation*}
\end{definition}
When $\theta=\theta(\lambda)$ is a discounted evaluation, we write $\Psi^n_{\lambda}$ instead of $\Psi^n_{\theta(\lambda)}$. 
\\
The following lemma establishes important properties for the family of operators $(\Psi^n_{\theta})$. 
\begin{lemma} \label{iterate} 
Let $f,g \in X$, $\lambda, \lambda' \in [0,1]$, $n \in \m{N}^*$, and $\theta, \theta' \in \Delta(\m{N}^*) \cup \left\{\mathbf{0} \right\}$. The following assertions hold:
\begin{enumerate}[(i)]
\item \label{it1}
\begin{equation*}
\left\|\Psi^n_{\theta}(f)-\Psi^n_{\theta}(g)\right\| \leq \left(\sum_{m \geq n+1} \theta_m \right) \left\|f-g \right\|
\end{equation*}
\item \label{bounded}
\begin{equation*}
\left\|\Psi^n_{\theta}(f) \right\| \leq C \sum_{m=1}^n \theta_m+\sum_{m \geq n+1} \theta_m \left\|f\right\|
\end{equation*}
\item \label{it2}
Under Assumption \ref{assop},
\begin{equation*}
\left\|\Psi^n_{\theta}(f)-\Psi^n_{\theta'}(f)\right\| \leq C \sum_{m=1}^n \left|\theta_m-\theta'_m\right|+\left|\sum_{m \geq n+1} (\theta_m-\theta'_m) \right| \left\| f \right\|.
\end{equation*}
\end{enumerate}
\end{lemma}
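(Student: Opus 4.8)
The natural approach is induction on $n$, peeling off one application of $\Psi$ at a time via the defining recursion $\Psi^n_{\theta}(f)=\Psi(\theta_1,\Psi^{n-1}_{\hat\theta}(f))$ and passing to the shift $\hat\theta$. The engine of every step is the elementary cancellation between the contraction/regularity factor $(1-\theta_1)$ produced by $\Psi$ and the normalization $(1-\theta_1)^{-1}$ hidden in the definition of $\hat\theta$: for any $k$ one has $(1-\theta_1)\sum_{m\ge k}\hat\theta_m=\sum_{m\ge k+1}\theta_m$ and $(1-\theta_1)\hat\theta_m=\theta_{m+1}$. I will carry out the induction for $\theta,\theta'\in\Delta(\m{N}^*)$; the only shifted sequence that can leave $\Delta(\m{N}^*)$ is $\mathbf{0}$, which arises exactly when some $\theta_1=1$, and in that case it appears multiplied by a vanishing coefficient, so it causes no difficulty (alternatively, $\Psi(1,\cdot)$ is constant by (\ref{nonexp})).

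For (\ref{it1}), the base case $n=1$ is $\left\|\Psi(\theta_1,f)-\Psi(\theta_1,g)\right\|\le(1-\theta_1)\left\|f-g\right\|$, which is exactly (\ref{nonexp}) and matches $\sum_{m\ge2}\theta_m=1-\theta_1$. For the step, (\ref{nonexp}) produces a factor $(1-\theta_1)$ in front of $\left\|\Psi^{n-1}_{\hat\theta}(f)-\Psi^{n-1}_{\hat\theta}(g)\right\|$, to which I apply the induction hypothesis for $\hat\theta$ at rank $n-1$; the surviving $(1-\theta_1)\sum_{m\ge n}\hat\theta_m$ collapses to $\sum_{m\ge n+1}\theta_m$. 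Part (\ref{bounded}) is the same induction with (\ref{reg}) in place of (\ref{nonexp}): the step yields $C\theta_1+(1-\theta_1)\left\|\Psi^{n-1}_{\hat\theta}(f)\right\|$, and after the induction hypothesis the cancellation turns $C\theta_1+C\sum_{m=2}^{n}\theta_m$ into $C\sum_{m=1}^{n}\theta_m$ and $(1-\theta_1)\sum_{m\ge n}\hat\theta_m$ into $\sum_{m\ge n+1}\theta_m$.

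Part (\ref{it2}) is the crux, because comparing $\Psi^n_{\theta}$ with $\Psi^n_{\theta'}$ introduces two different normalizers $(1-\theta_1)^{-1}$ and $(1-\theta'_1)^{-1}$, so one can no longer factor a single scalar out of the inner term. The plan is to prove the stronger, scale-homogeneous statement
\begin{equation*}
\left\|\alpha\Psi^n_{\eta}(f)-\alpha'\Psi^n_{\eta'}(f)\right\|\le C\sum_{m=1}^{n}\left|\alpha\eta_m-\alpha'\eta'_m\right|+\left|\alpha\sum_{m\ge n+1}\eta_m-\alpha'\sum_{m\ge n+1}\eta'_m\right|\left\|f\right\|
\end{equation*}
for all coefficients $\alpha,\alpha'\in[0,1]$ and all $\eta,\eta'\in\Delta(\m{N}^*)$; statement (\ref{it2}) is the case $\alpha=\alpha'=1$, $\eta=\theta$, $\eta'=\theta'$. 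This is exactly the shape for which Assumption \ref{assop} is tailored: applied with scalars $\alpha,\alpha'$ and levels $\eta_1,\eta'_1$, it bounds the left-hand side by $C\left|\alpha\eta_1-\alpha'\eta'_1\right|$ plus the norm of $\alpha(1-\eta_1)\Psi^{n-1}_{\hat\eta}(f)-\alpha'(1-\eta'_1)\Psi^{n-1}_{\hat{\eta'}}(f)$. I then invoke the induction hypothesis at rank $n-1$ with the new coefficients $\alpha(1-\eta_1),\alpha'(1-\eta'_1)\in[0,1]$ and the shifted weights $\hat\eta,\hat{\eta'}$. The decisive simplification is that the new coefficient and the normalization cancel, $\alpha(1-\eta_1)\hat\eta_m=\alpha\eta_{m+1}$ and $\alpha(1-\eta_1)\sum_{m\ge n}\hat\eta_m=\alpha\sum_{m\ge n+1}\eta_m$ (likewise for $\eta'$), so the first sum re-indexes from $\{1,\dots,n-1\}$ to $\{2,\dots,n\}$ and combines with the leading $C\left|\alpha\eta_1-\alpha'\eta'_1\right|$ to give $C\sum_{m=1}^{n}$, while the tail term becomes exactly the claimed one.

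The main obstacle is precisely this part (\ref{it2}): one must resist comparing $\Psi^n_{\theta}(f)$ and $\Psi^n_{\theta'}(f)$ directly and instead strengthen the induction hypothesis to carry free scalar multipliers, so that the mismatched normalizations $(1-\theta_1)$ and $(1-\theta'_1)$ are absorbed into those multipliers at each stage rather than accumulating. Once the scaled statement is in place, Assumption \ref{assop} does all the work and the verification is the routine re-indexing above; parts (\ref{it1}) and (\ref{bounded}) then follow from the same induction template with the weaker hypotheses (\ref{nonexp}) and (\ref{reg}).
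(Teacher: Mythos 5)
Your proof is correct and is essentially the paper's own: parts (\ref{it1}) and (\ref{bounded}) are exactly the same iteration of (\ref{nonexp}) and (\ref{reg}) combined with the telescoping identity $(1-\theta_1)\sum_{m\ge k}\hat\theta_m=\sum_{m\ge k+1}\theta_m$, and for part (\ref{it2}) the paper likewise applies Assumption \ref{assop} $n$ times with scalar multipliers chosen to absorb the mismatched normalizations. The only difference is organizational: you strengthen the induction hypothesis to carry arbitrary scalars $\alpha,\alpha'\in[0,1]$ and recurse outside-in, whereas the paper fixes the scalars to be the tail masses $\Pi_r(\theta)=\sum_{m\ge r}\theta_m$ and inducts inside-out on $\Phi(s)=\left\|\Pi_{n-s+1}(\theta)\,\Psi^{s}_{\theta^{n-s+1}}(f)-\Pi_{n-s+1}(\theta')\,\Psi^{s}_{{\theta'}^{n-s+1}}(f)\right\|$; once unwound, your scalars are precisely these tail masses and the two chains of inequalities coincide.
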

\begin{proof}
We introduce the following piece of notation: for $\theta \in \Delta(\m{N}^*) \cup \left\{\mathbf{0}\right\}$ and $r \in \m{N}^*$, let 
\begin{equation*}
\Pi_r(\theta):=\prod_{r'=1}^{r-1} (1-\theta^{r'}_1)=\sum_{m \geq r} \theta_m.
\end{equation*} 
\begin{enumerate}[(i)]
\item 
By Equation (\ref{nonexp}), we have
\begin{eqnarray*}
\left\|\Psi^n_{\theta}(f)-\Psi^n_{\theta'}(g)\right\|&=&
\left\|\Psi(\theta_1,\Psi^{n-1}_{\hat{\theta}}(f))-\Psi(\theta_1,\Psi^{n-1}_{\hat{\theta}}(g))
\right\|
\\
&\leq& (1-\theta_1) \left\|\Psi^{n-1}_{\hat{\theta}}(f)-\Psi^{n-1}_{\hat{\theta}}(g)\right\|.
\end{eqnarray*}
Iterating this inequality yields
\begin{eqnarray*}
\left\|\Psi^n_{\theta}(f)-\Psi^n_{\theta}(g)\right\|&\leq& \prod_{m=1}^n(1-\theta^{m}_1) \left\|f-g\right\|
\\
&=& \left(\sum_{m \geq n+1} \theta_m \right) \left\|f-g\right\|.
\end{eqnarray*}
\item 
Applying Equation (\ref{reg}), we get
\begin{eqnarray*}
\left\|\Psi^n_{\theta}(f) \right\| \leq C \theta_1+(1-\theta_1) \left\|\Psi^{n-1}_{\hat{\theta}}(f) \right\|,
\end{eqnarray*}
and the result follows by induction.
\item
For $s \in \left\{0,1,...,n\right\}$, define 
\begin{equation*}
\Phi(s):=\left\| \Pi_{n-s+1}(\theta) \Psi_{\theta^{n-s+1}}^{s}(f)-{\Pi}_{n-s+1}(\theta') \Psi_{{\theta'}^{n-s+1}}^{s}(f) \right\|.
\end{equation*}
Let us prove by induction that for all $s \in \left\{0,1,...,n \right\}$,
\begin{equation} \label{ineqii}
\Phi(s) \leq C \sum_{m=n-s+1}^n \left|\theta_m-\theta'_m\right|+\left|\sum_{m \geq n+1} (\theta_m-\theta'_m) \right| \left\| f \right\|.
\end{equation}
We have $\Phi(0)=\left| \Pi_{n+1}(\theta)-\Pi'_{n+1}(\theta') \right| \left\|f \right\|$, thus the above inequality holds for $s=0$. 
\\
Assume $s \geq 1$. Let $\alpha:=\Pi_{n-s+1}(\theta)$, $\beta:=\Pi'_{n-s+1}(\theta')$,  $\lambda=\theta^{n-s+1}_1$, 
$\lambda'=\theta'^{n-s+1}_1$, $f:= \Psi_{\theta^{n-s+2}}^{s-1}(f)$ and $g:=\Psi_{{\theta'}^{n-s+2}}^{s-1}(f)$. Applying Assumption \ref{assop}, we get
\begin{eqnarray*}
\Phi(s)&=&\left\|\alpha \Psi(\lambda,f)-\beta \Psi(\lambda',g)\right\|
\\
&\leq& C \left|\alpha \lambda-\beta \lambda'\right|+\left\| \alpha(1-\lambda) f- \beta (1-\lambda') g \right\|
\\
&=& C \left|\theta_{n-s+1}-\theta'_{n-s+1} \right|+\Phi(s-1).
\end{eqnarray*}
By induction hypothesis, we deduce that inequality (\ref{ineqii}) holds for all $s \in \left\{0,1,...,n \right\}$. Taking $s=n$ yields \ref{it2}.
\end{enumerate}
\end{proof}
\subsection{Proofs of Propositions \ref{existence} and \ref{taub}}
\subsubsection{Proposition \ref{existence}}
Let us start with the proof of Proposition \ref{existence}, that is, the existence and uniqueness
of the family $(v_{\theta})$ defined by Equation $(\ref{recop})$.
Let $\theta \in \Delta(\m{N}^*) \cup \left\{\mathbf{0}\right\}$. 
For $n \geq 1$, define 
\begin{equation*}
f^n_{\theta}:=\Psi^n_{\theta}(0).
\end{equation*}
Note that by Lemma \ref{iterate} \ref{bounded}, for all $n \in \m{N}^*$, $\left\|f^n_{\theta}\right\| \leq C$. 
Let $n \geq 1$ and $l \geq 0$. Using Lemma \ref{iterate} \ref{it1} and \ref{bounded}, we get
\begin{eqnarray*}
\left\|f^{n+l}_{\theta}-f^n_{\theta} \right\|&=&
\left\|\Psi^n_{\theta}(f^{l}_{\theta^{n+1}})-\Psi^n_{\theta}(0) \right\|
\\
&\leq& \left(\sum_{m \geq n+1} \theta_m \right) \left\|f^{l}_{\theta^{n+1}} \right\|
\\
&\leq& C \sum_{m \geq n+1} \theta_m.
\end{eqnarray*}
It follows that $(v_{\theta^n})$ is a Cauchy sequence in the Banach space $X$. Consequently, it converges. Define $v_{\theta}:=\lim_{n \rightarrow+\infty} v_{\theta^n}$. 

By definition, we have
\begin{equation*}
f^n_{\theta}=\Psi(\theta_1,f^{n-1}_{\hat{\theta}}).
\end{equation*}
Because $\Psi(\theta_1,.)$ is continuous, passing to the limit gives that $v_{\theta}$ satisfies Equation (\ref{recop}). Moreover, $(v_{\theta})_{\theta \in \Delta(\m{N}^*) \cup \left\{\mathbf{0}\right\}}$ is bounded by $C$.

Now let us prove the uniqueness of the family $(v_{\theta})_{\theta \in \Delta(\m{N}^*)\cup \left\{\mathbf{0}\right\}}$. Let $(w_{\theta})_{\theta \in \Delta(\m{N}^*)\cup \left\{\mathbf{0}\right\}}$ satisfying the conditions of Proposition \ref{existence}. Using Lemma \ref{iterate} \ref{it1}, we deduce that for all $\theta \in \Delta(\m{N}^*)$ and $n \geq 1$,
\begin{eqnarray*}
\left\|v_{\theta}-w_{\theta} \right\| &\leq& \left(\sum_{m \geq n+1} \theta_m \right) \left\|v_{\theta^n}-w_{\theta^n}\right\|
\\
&\leq&  \left(\sum_{m \geq n+1} \theta_m\right) \left(\left\|v_{\theta^n}\right\|+\left\|w_{\theta^n}\right\|\right).
\end{eqnarray*}
Because $(v_{\theta})_{\theta \in \Delta(\m{N}^*)\cup \left\{\mathbf{0}\right\}}$ and $(w_{\theta})_{\theta \in \Delta(\m{N}^*)\cup \left\{\mathbf{0}\right\}}$ are bounded, taking $n$ to infinity yields that $v_{\theta}=w_{\theta}$ for all $\theta \in \Delta(\m{N}^*) \cup \left\{\mathbf{0} \right\}$. 
\subsubsection{Proposition \ref{taub}}
As mentioned earlier, this proposition is a direct consequence of the proof in \cite[Theorem 1.2]{Z15}. 
Indeed, the only difference between Theorem 1.2 in \cite{Z15} and Proposition \ref{taub} is that the theorem deals with a more specific class of operators: as in Remark \ref{rknonexp}, the operator $\Psi$ is of the form
\begin{equation*}
\forall \lambda \in (0,1] \quad \forall f \in X \quad \Psi(\lambda,f)=\lambda \Phi(\lambda^{-1}(1-\lambda) f),
\end{equation*}
where $\Phi:X \rightarrow X$ is a 1-Lipschitz operator. The key point is that the proof of the theorem only relies on the properties of the iterated operator stated in \cite[Lemma 1]{Z15}. Thanks to Lemma \ref{iterate}, these properties are also satisfied in our more general framework. Consequently, the same proof applies as well.

\subsection{Proof of Theorem \ref{taubgen} and Corollary \ref{cor}}
In this subsection, Assumption \ref{assop} is in force. Let us start with a lemma, that is important to prove both Theorem \ref{taubgen} 
and Corollary \ref{cor}. 
\begin{lemma} \label{comp}
Let $\theta, \theta' \in \Delta(\m{N}^*) \cup \left\{\mathbf{0}\right\}$. The following assertion holds:
\begin{equation*} 
\left\|v_{\theta}-v_{\theta'}\right\| \leq C \left\|\theta-\theta'\right\|_1.
\end{equation*}
In particular,
\begin{equation*} 
\left\|v_{\theta}\right\| \leq C.
\end{equation*}
\end{lemma}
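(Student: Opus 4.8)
The plan is to reduce everything to the iterate estimate in Lemma \ref{iterate}\ref{it2}, evaluated at the special point $f=0$. Recall from the proof of Proposition \ref{existence} that $v_{\theta}=\lim_{n\to\infty}\Psi^n_{\theta}(0)$ (and likewise for $\theta'$), so it suffices to bound $\left\|\Psi^n_{\theta}(0)-\Psi^n_{\theta'}(0)\right\|$ uniformly in $n$ and then pass to the limit.

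First I would apply Lemma \ref{iterate}\ref{it2} with $f=0$. The crucial observation is that the tail summand $\left|\sum_{m \geq n+1}(\theta_m-\theta'_m)\right|\,\|f\|$ vanishes identically because $\|0\|=0$; this is precisely why evaluating at the origin is convenient. What remains is the chain of inequalities
\begin{equation*}
\left\|\Psi^n_{\theta}(0)-\Psi^n_{\theta'}(0)\right\| \leq C\sum_{m=1}^n |\theta_m-\theta'_m| \leq C\sum_{m \geq 1}|\theta_m-\theta'_m| = C\left\|\theta-\theta'\right\|_1,
\end{equation*}
a bound independent of $n$. Letting $n\to\infty$ and using continuity of the norm together with $\Psi^n_{\theta}(0)\to v_{\theta}$ and $\Psi^n_{\theta'}(0)\to v_{\theta'}$ then yields $\left\|v_{\theta}-v_{\theta'}\right\| \leq C\left\|\theta-\theta'\right\|_1$.

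For the ``in particular'' clause, I would specialize $\theta'=\mathbf{0}$. One first checks that $v_{\mathbf{0}}=0$: indeed $\widehat{\mathbf{0}}=\mathbf{0}$, and Equation (\ref{reg}) taken at $\lambda=0$, $f=0$ forces $\Psi(0,0)=0$, so by induction $\Psi^n_{\mathbf{0}}(0)=0$ for every $n$, whence $v_{\mathbf{0}}=0$. Since $\left\|\theta-\mathbf{0}\right\|_1=\sum_{m \geq 1}\theta_m=1$ for $\theta\in\Delta(\m{N}^*)$ (the case $\theta=\mathbf{0}$ being trivial), the first inequality gives $\left\|v_{\theta}\right\|\leq C$. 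Alternatively, this is immediate from Lemma \ref{iterate}\ref{bounded} at $f=0$ followed by the same passage to the limit.

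I do not expect any serious obstacle here: the whole statement is essentially a one-line consequence of Lemma \ref{iterate}\ref{it2} once one notices that working at $f=0$ eliminates the tail term that would otherwise carry the (bounded but nonzero) factor $\|f\|$. The only points demanding a little care are the legitimacy of interchanging the norm with the limit in $n$, which is handled by continuity of $\|\cdot\|$, and the identification $v_{\theta}=\lim_{n}\Psi^n_{\theta}(0)$; both are already available from the proof of Proposition \ref{existence}.
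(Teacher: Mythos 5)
Your proof is correct and follows essentially the same route as the paper's: both take $f=0$ in Lemma \ref{iterate} \ref{it2} (killing the tail term), pass to the limit $n\to\infty$ using the identification $v_{\theta}=\lim_{n}\Psi^n_{\theta}(0)$ from the proof of Proposition \ref{existence}, and deduce $\left\|v_{\theta}\right\|\leq C$ by specializing $\theta'=\mathbf{0}$. The only difference is cosmetic: you make explicit the check that $v_{\mathbf{0}}=0$ (via $\Psi(0,0)=0$ forced by (\ref{reg})), which the paper leaves implicit.
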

\begin{proof}
Taking $f=0$ in Lemma \ref{iterate} \ref{it2}, we get that for all $n \in \m{N}$,
\begin{equation*}
\left\|\Psi^n_{\theta}(0)-\Psi^n_{\theta'}(0)\right\|
\leq C \left\|\theta-\theta'\right\|_1.
\end{equation*}
Taking $n$ to infinity in the above equation yields the first inequality. Taking $\theta'=\mathbf{0}$ in the first inequality implies that $\left\|v_{\theta}\right\| \leq C$ (this is also a consequence of the proof of Proposition \ref{existence}).
\end{proof}
\subsubsection{Theorem \ref{taubgen}}
Now let us prove Theorem \ref{taubgen}.
By the previous lemma, it is enough to prove that for any $p \in \m{N}^*$, the following property $\mathcal{A}(p)$ holds: 
\\
for all $\epsilon>0$, there exists
$\alpha>0$ such that for all $\theta \in \Theta^p$, 
\begin{equation} \label{rank}
\sup_{m \geq 1} \theta_m \leq \alpha \Rightarrow
\left\|v_{\theta}-v^* \right\|  \leq \epsilon.
\end{equation}
Let us prove the result by induction on $p \geq 1$. By assumption, $(v_n)$ converges to $v^*$, thus $\mathcal{A}(1)$ holds. Assume now that $\mathcal{A}(p)$ holds for some $p \geq 1$. 
\\
Let $\theta \in \Theta^{p+1}$: there exists
$a_1,...,a_p \in [0,1]$ and $m_1,...,m_{p+1} \in \m{N}^*$ such that $m_1=1$ and for all $p' \in \left\{1,...,p \right\}$, for all $m \in \left\{m_{p'},...,m_{p'+1}-1\right\}$, $\theta_m=a_{p'}$, and $\theta_m=0$ for all $m \geq m_{p+1}$. 
Thanks to Lemma \ref{comp}, we may assume without loss of generality that $0<a_1<1$ (indeed we can perturbate the weights so that they become positive, and this will not significantly change the value $v_{\theta}$). Let $\theta':=\theta^{m_2}$. 
Let $n_1$ be the integer part of $1/a_1$.
We have
\begin{equation} \label{fix11}
v_{\theta}=\Psi^{m_2-1}_{\theta}(v_{\theta'}),
\end{equation}
and
\begin{equation} \label{fix12}
v_{n_1}=(n_1)^{-1}\Psi^{m_2-1}((n_1-m_2+1)v_{n_1-m_2+1}).
\end{equation}
Moreover, 
\begin{eqnarray} \label{tri}
\left\|v_{\theta}-v_{n_1} \right\| &\leq& \left\|v_{\theta}-\Psi^{m_2-1}_{\theta}(v_{n_{1}-m_2+1}) \right\|+
\left\|\Psi^{m_2-1}_{\theta}(v_{n_{1}-m_2+1})-v_{n_1} \right\|.
\end{eqnarray}
Equations (\ref{fix11}), (\ref{fix12}) and Lemma \ref{iterate} \ref{it1} imply that
\begin{eqnarray} \label{tri1}
\left\|v_{\theta}-\Psi^{m_2-1}_{\theta}(v_{n_{1}-m_2+1}) \right\| &\leq& (1-(m_2-1)a_1) \left\|v_{\theta'}-v_{n_{1}-m_2+1} \right\|  
\end{eqnarray}
and
\begin{eqnarray}
\nonumber
\left\|\Psi^{m_2-1}_{\theta}(v_{n_{1}-m_2+1})-v_{n_1} \right\| &\leq&
C(m_2-1)(1/n_1-a_1)+(m_2-1)(1/n_1-a_1) \left\|v_{n_1-m_2+1}\right\|
\\
\nonumber
&\leq& 2C (m_2-1)(1/n_1-a_1)
\\
\nonumber
&\leq& 2C (m_2-1)a_1^2/(1-a_1)
\\ \label{tri2}
&\leq& 2C a_1/(1-a_1). 
\end{eqnarray}
Note that the above quantity vanishes as $\sup_{m \geq 1} \theta_m$ goes to 0. 
\\
Let $\epsilon>0$, and let $\alpha$ be as in (\ref{rank}). Let us prove that there exists $\alpha'>0$ such that 
\begin{equation*}
\sup_{m \geq 1} \theta_m \leq \alpha'
\
\Rightarrow
\
(1-(m_2-1)a_1) \left\|v_{\theta'}-v_{n_{1}-m_2+1} \right\| \leq \epsilon/2.
\end{equation*}
Thanks to Equations (\ref{tri}), (\ref{tri1}) and (\ref{tri2}), this is enough to prove that $\mathcal{A}(p+1)$ holds. 
We discriminate between the two following cases:
\begin{case}{$(m_2-1)a_1 \geq 1-(\epsilon/4C)$}
\end{case}
In this case, 
\begin{eqnarray*}
(1-(m_2-1)a_1) \left\|v_{\theta'}-v_{n_1-m_2+1} \right\| &\leq& (\epsilon/4C) \left(\left\|v_{\theta'}\right\|
+\left\|v_{n_1-m_2+1} \right\| \right) \leq \epsilon/2.
\end{eqnarray*}
\begin{case}{$(m_2-1)a_1 \leq 1-(\epsilon/4C)$}
\end{case}
Because $(v_n)$ converges to $v^*$, there exists $n_0 \in \m{N}^*$ such that for all $n \geq n_0$,
\begin{equation*}
\left\|v_n-v^*\right\| \leq \epsilon/4. 
\end{equation*}
Note that $n_1-m_2+1 \geq 1/a_1-1-(1-\epsilon/4C)/a_1=\epsilon/(4C a_1)-1$. 
Moreover, 
\\
$\theta'_1=a_2/(1-(m_2-1)a_1) \leq (4C/\epsilon) \sup_{m \geq 1} \theta_m$. 
Take $\alpha'=\min(\epsilon/(4C(n_0+1)),\alpha \epsilon/4C)$. Hence
\begin{equation*} 
\sup_{m \geq 1} \theta_m \leq \alpha' \Rightarrow
(1-(m_2-1)a_1) \left\|v_{\theta'}-v_{n_{1}-m_2+1} \right\| \leq \epsilon/2,
\end{equation*}
which concludes the proof.
\subsubsection{Corollary \ref{cor}}
The main idea is to approximate piecewise-discounted evaluations by piecewise-constant evaluations. Indeed, thanks to Lemma \ref{comp}, in order to prove Corollary \ref{cor}, it is enough to prove that for all $p \geq 1$, the following
property holds:
\begin{equation*}
\forall \epsilon>0, \quad \exists p' \in \m{N}^*, \quad \forall \theta \in \Theta^p_d, \quad D(\theta,\Theta^{p'}) \leq \epsilon. 
\end{equation*}
Let us prove the property for $p=1$. Let $\epsilon \in (0,1/10]$, and assume without loss of generality that $1/\epsilon \in \m{N}^*$. Set $p'=\epsilon^{-3}$. 
Let $\theta \in \Theta^1_d=\left\{\theta(\lambda), \lambda \in (0,1] \right\}$. Let $\lambda \in (0,1]$ such that for all $m \geq 1$, $\theta_m=\lambda(1-\lambda)^{m-1}$. We distinguish between the two
following cases:
\setcounter{case}{0}
\begin{case}{$\lambda \geq \epsilon^3$}
\end{case}
 Define $\theta \in \Theta^{p'}$ by
$\theta'_m=\theta_m$ for $m \in \left\{1,...,p'-1\right\}$, $\theta'_{p'}=(1-\lambda)^{p'-1}$, and $\theta'_m=0$ for $m \geq p'+1$. We have $\left\|\theta-\theta'\right\|_1 \leq 2(1-\lambda)^{p'-1} \leq (1-\epsilon)^{\epsilon^{-3}-1} \leq \epsilon$.
\begin{case}{$\lambda < \epsilon^3$}
\end{case}
For all $r \geq 1$, define $m_r:=r\lfloor \epsilon^2/\lambda \rfloor+1$. Define $\theta \in \Theta^{p'}$ by $\theta'_m=\theta_{m_r}$ for 
all $r \in \left\{1,...,\epsilon^{-3}-1\right\}$ and $m \in \left\{m_r,...,m_{r+1}-1\right\}$, and 
$\theta_{m_{\epsilon^{-3}}}:=1-\sum_{m=1}^{m_{\epsilon^{-3}}} \theta'_m$.  
\\
By definition, for all $r \in \left\{1,...,\epsilon^{-3}-1\right\}$ and $m \in \left\{m_r,...,m_{r+1}-1\right\}$, 
we have 
\begin{equation*}
(1-\lambda)^{\lfloor \epsilon^2/\lambda \rfloor-1} \theta'_m\leq \theta_m \leq \theta'_m, 
\end{equation*}
thus
$\left|\theta_m-\theta'_m \right| \leq (1-(1-\lambda)^{\lfloor \epsilon^2/\lambda \rfloor-1}) \theta'_m \leq \epsilon^2 \theta'_m$. Moreover, we have
\begin{equation*}
\sum_{m \geq m_{\epsilon^{-3}}} \theta_m=(1-\lambda)^{m_{\epsilon^{-3}}} \leq \epsilon^2.
\end{equation*}
Hence $\left\|\theta-\theta'\right\|_1 \leq \epsilon^{2}+\epsilon^{2} \leq \epsilon$ and this proves the property for $p=1$. By a straightforward induction, the corollary holds. 
\subsection{Proof of Theorem \ref{taubbv}}
\subsubsection{Under Assumption (a)}
Let $\epsilon \in (0,1/10)$ and $\theta \in \mathcal{D}$. Thanks to Lemma \ref{comp}, we assume without loss of generality that for all $m \geq 1$, $\theta_m>0$. 
Let $t_1:=1$, and for $r \geq 1$, define recursively 
\begin{eqnarray*}
t_{r+1}:=\max \left\{ m \geq 1 \ | \ (1-\epsilon) \theta^{t_r}_m \leq \theta^{t_r}_1(1-\theta^{t_r}_1)^{m-1} \leq (1+\epsilon) \theta^{t_r}_m \right\}+t_r.
\end{eqnarray*}
with the convention $\max \emptyset=+\infty$ and $t_{r+1}=+\infty$ if $t_r=+\infty$. 
  \\
In words, between stage 1 and stage $t_r-1$, the discounted evaluation with discount factor $\theta^{t_{r-1}}_1$ is a good approximation of the evaluation $\theta^{t_{r-1}}$. 
\\
Let $\tilde{m}:=\max \left\{m \geq 1 \ | \ \sum_{m' \geq m} \theta_{m'} \geq \epsilon \right\}$ and 
$\tilde{r}:=\max \left\{r \geq 1 \ | \ t_r \leq \tilde{m} \right\}$. Define $m_r:=t_r$ for all $r \in \left\{1,...,\tilde{r}\right\}$, and $m_{\tilde{r}+1}:=\tilde{m}$.   
For $r \in \left\{1,...,\tilde{r}+1\right\}$, define $\mu^r:=\theta^{m_r}$ and
\begin{equation*}
\lambda_r:=\mu^r_1=\theta^{m_r}_1=\theta_{m_r}\left(\sum_{m \geq m_r} \theta_{m}\right)^{-1}.
\end{equation*}
Define $\pi_r \in [0,1]$ by
\begin{equation*}
\pi_r:=\prod_{r'=1}^{r-1} (1-\lambda_{r'})=\sum_{m \geq m_r} \theta_m.
\end{equation*}
\begin{lemma}
The following inequality holds: 
\begin{equation*}
\left\|v_{\theta}-v_{\lambda_1} \right\| \leq
\sum_{r=1}^{\tilde{r}} \left\|v_{\lambda_{r}}-v_{\lambda_{r+1}} \right\|+2C(2\epsilon+\theta_1).
\end{equation*}
\end{lemma}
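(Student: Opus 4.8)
The plan is to interpolate between $v_\theta$ and the discounted value $v_{\lambda_1}$ through a chain of hybrid vectors, one per block, and to telescope. Concretely, for $r \in \{1,\dots,\tilde r+1\}$ I would set $w_r := \Psi^{m_r-1}_\theta(v_{\lambda_r})$. Since $m_1=1$ and $\Psi^0_\theta=\id$ we have $w_1=v_{\lambda_1}$, so the triangle inequality gives
\[
\|v_\theta-v_{\lambda_1}\| \leq \|v_\theta-w_{\tilde r+1}\| + \sum_{r=1}^{\tilde r}\|w_{r+1}-w_r\|.
\]
The whole proof then reduces to bounding the boundary term $\|v_\theta - w_{\tilde r+1}\|$ and each block increment $\|w_{r+1}-w_r\|$ (throughout I use the running assumption $\theta_m>0$, so every $\mu^r$ is a genuine probability).

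For the boundary term I would use the identity $v_\theta = \Psi^{n}_\theta(v_{\theta^{n+1}})$, obtained by iterating (\ref{recop}) exactly as in the proof of Proposition \ref{existence}, at $n=\tilde m-1$. Since $\theta^{\tilde m}=\mu^{\tilde r+1}$ and $m_{\tilde r+1}=\tilde m$, both $v_\theta$ and $w_{\tilde r+1}$ are images of $\Psi^{\tilde m-1}_\theta$, so Lemma \ref{iterate} \ref{it1} yields $\|v_\theta-w_{\tilde r+1}\| \leq (\sum_{m\geq\tilde m}\theta_m)\,\|v_{\mu^{\tilde r+1}}-v_{\lambda_{\tilde r+1}}\|$. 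By the definition of $\tilde m$ together with the monotonicity of $\theta$ the prefactor is at most $\theta_1+\epsilon$, and Lemma \ref{comp} bounds the difference of values by $2C$, giving $\|v_\theta-w_{\tilde r+1}\|\leq 2C(\epsilon+\theta_1)$.

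For each increment I would factor the iterated operator as $\Psi^{m_{r+1}-1}_\theta = \Psi^{m_r-1}_\theta \circ \Psi^{m_{r+1}-m_r}_{\mu^r}$ (the same composition rule used in Proposition \ref{existence}, with $\mu^r=\theta^{m_r}$), so that $w_{r+1}-w_r = \Psi^{m_r-1}_\theta(\Psi^{j}_{\mu^r}(v_{\lambda_{r+1}})) - \Psi^{m_r-1}_\theta(v_{\lambda_r})$, where $j:=m_{r+1}-m_r$. Lemma \ref{iterate} \ref{it1} pulls out the factor $\pi_r=\sum_{m\geq m_r}\theta_m$, leaving $\|\Psi^{j}_{\mu^r}(v_{\lambda_{r+1}})-v_{\lambda_r}\|$. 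Using that $v_{\lambda_r}$ is a fixed point of the stationary discounted iteration, i.e. $v_{\lambda_r}=\Psi^{j}_{\lambda_r}(v_{\lambda_r})$, I would split this into $\Psi^{j}_{\mu^r}(v_{\lambda_{r+1}})-\Psi^{j}_{\lambda_r}(v_{\lambda_{r+1}})$ plus $\Psi^{j}_{\lambda_r}(v_{\lambda_{r+1}})-\Psi^{j}_{\lambda_r}(v_{\lambda_r})$. The second piece is at most $(1-\lambda_r)^{j}\|v_{\lambda_{r+1}}-v_{\lambda_r}\|\leq \|v_{\lambda_{r+1}}-v_{\lambda_r}\|$ by Lemma \ref{iterate} \ref{it1} applied to the discounted evaluation, and this is what produces the telescoping sum after multiplying by $\pi_r\leq 1$. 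The first piece is controlled by Lemma \ref{iterate} \ref{it2}: the construction of the times $t_r$ guarantees $(1-\epsilon)\mu^r_m\leq \theta(\lambda_r)_m\leq(1+\epsilon)\mu^r_m$ for all $m\leq j$, so the $\ell^1$ discrepancy on the first $j$ coordinates is at most $\epsilon\sum_{m=1}^j\mu^r_m$, the tail discrepancy is the same (both evaluations are probabilities), and $\|v_{\lambda_{r+1}}\|\leq C$; hence this piece is at most $2C\epsilon\sum_{m=1}^j\mu^r_m$.

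The delicate point is the bookkeeping that turns these per-block errors into a convergent sum, and I expect the main obstacle to be verifying that the product $\pi_r\sum_{m=1}^j\mu^r_m$ collapses rather than accumulating a factor $\tilde r$. Writing $\mu^r_m=\theta_{m+m_r-1}/\pi_r$ shows $\pi_r\sum_{m=1}^j\mu^r_m = \sum_{k=m_r}^{m_{r+1}-1}\theta_k = \pi_r-\pi_{r+1}$, so the error contribution of block $r$ after the $\pi_r$ factor is $2C\epsilon(\pi_r-\pi_{r+1})$, and these telescope to at most $2C\epsilon\pi_1=2C\epsilon$. Combining the three estimates gives
\[
\|v_\theta-v_{\lambda_1}\|\leq \sum_{r=1}^{\tilde r}\|v_{\lambda_r}-v_{\lambda_{r+1}}\| + 2C\epsilon + 2C(\epsilon+\theta_1),
\]
which is precisely the claimed bound $\sum_{r=1}^{\tilde r}\|v_{\lambda_r}-v_{\lambda_{r+1}}\| + 2C(2\epsilon+\theta_1)$.
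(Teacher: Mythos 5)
Your proposal is correct and follows essentially the same route as the paper: the paper organizes the same block-by-block comparison as a weighted recursion $\pi_r\left\|v_{\mu^r}-v_{\lambda_r}\right\|\leq\pi_{r+1}\left\|v_{\mu^{r+1}}-v_{\lambda_{r+1}}\right\|+\pi_{r+1}\left\|v_{\lambda_{r+1}}-v_{\lambda_r}\right\|+2\epsilon C(\pi_r-\pi_{r+1})$, obtained from the fixed-point identities and Lemma \ref{iterate} \ref{it1}, \ref{it2} --- exactly the two per-block estimates you use --- and unrolling that recursion gives precisely your telescoping through the hybrids $w_r=\Psi^{m_r-1}_{\theta}(v_{\lambda_r})$, with the same boundary bound $\pi_{\tilde{r}+1}\leq\epsilon+\theta_1$ and the same telescoping of the $2\epsilon C(\pi_r-\pi_{r+1})$ error terms. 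The differences are purely organizational (e.g.\ your coefficient $\pi_r$ versus the paper's $\pi_{r+1}$ on the discounted differences, and your invoking the iterated identity for $v_\theta$ only once at the boundary rather than block by block).
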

\begin{proof}
Let $r \in \left\{1,...,\tilde{r}\right\}$. We have
\begin{equation} \label{fix21}
v_{\mu^r}=\Psi^{m_{r+1}-m_r}_{\mu^r}(v_{\mu^{r+1}}),
\end{equation}
and
\begin{equation} \label{fix22}
v_{\lambda_r}=\Psi^{m_{r+1}-m_r}_{\lambda_r}(v_{\lambda_r}).
\end{equation}
Triangle inequality yields 
\begin{eqnarray} \label{tri20}
\left\|v_{\mu^r}-v_{\lambda_r} \right\| &\leq& \left\|v_{\mu^r}-\Psi^{m_{r+1}-m_r}_{\mu^r}(v_{\lambda_{r}}) \right\|+
\left\|\Psi^{m_{r+1}-m_r}_{\mu^r}(v_{\lambda_r})-v_{\lambda_r} \right\|.
\end{eqnarray}
Note that 
\begin{eqnarray*}
\left(\sum_{m \geq m_{r+1}-m_r+1} \mu^r_m \right)
&=&  \displaystyle \left(\sum_{m \geq m_{r+1}} \theta_m\right) \left(\displaystyle \sum_{m \geq m_r} \theta_m \right)^{-1}
\\
&=& \pi_{r+1}/\pi_r.
\end{eqnarray*}
Thus, by (\ref{fix21}) and Lemma \ref{iterate} \ref{it1}, we have
\begin{eqnarray}
\nonumber
\left\|v_{\mu^r}-\Psi^{m_{r+1}-m_r}_{\mu^r}(v_{\lambda_r}) \right\| &\leq& (\pi_{r+1}/\pi_r) \left\|v_{\mu^{r+1}}-v_{\lambda_r} \right\|  
\\ \label{tri21}
&\leq& (\pi_{r+1}/\pi_r) \left\|v_{\mu^{r+1}}-v_{\lambda_{r+1}}\right\| + (\pi_{r+1}/\pi_r) \left\|v_{\lambda_{r+1}}-v_{\lambda_{r}} \right\|
\end{eqnarray}
Equations (\ref{fix22}) and Lemma \ref{iterate} \ref{it2} yield
\begin{eqnarray}
\nonumber
\left\|\Psi^{m_{r+1}-m_r}_{\mu^r}(v_{\lambda_r})-v_{\lambda_r} \right\| &\leq& C \sum_{m=1}^{m_{r+1}-m_r} \left|\mu^r_{m}-\lambda_r(1-\lambda_r)^{m-1}\right| 
\\ \nonumber
&+& \left|\sum_{m \geq m_{r+1}-m_r+1} \mu^r_{m}-\lambda_r(1-\lambda_r)^{m-1} \right| \left\|v_{\lambda_r}\right\|
\\ \label{tri22}
&\leq& 2 \epsilon C (1-\pi_{r+1}/\pi_r).
\end{eqnarray}
The last inequality stems from the definition of $(m_r)$ and the fact that $\left\|v_{\lambda_r}\right\| \leq C$. 
Combining (\ref{tri20}), (\ref{tri21}) and (\ref{tri22}) yield
\begin{equation*}
\left\|v_{\mu^r}-v_{\lambda_r} \right\| \leq 
(\pi_{r+1}/\pi_r) \left\|v_{\mu^{r+1}}-v_{\lambda_{r+1}} \right\|
+(\pi_{r+1}/\pi_r) \left\|v_{\lambda_{r+1}}-v_{\lambda_{r}} \right\|
+2\epsilon C(1-\pi_{r+1}/\pi_r),
\end{equation*}
thus
\begin{equation*}
\pi_r \left\|v_{\mu^r}-v_{\lambda_r} \right\| \leq \pi_{r+1} \left\|v_{\mu^{r+1}}-v_{\lambda_{r+1}} \right\|+ \pi_{r+1}\left\|v_{\lambda_{r+1}}-v_{\lambda_{r}}\right\|+2\epsilon C \theta_{m_r}.
\end{equation*}
Summing from $r=1$ to $r=\tilde{r}$ yields
\begin{equation*}
\left\|v_{\theta}-v_{\lambda_1} \right\| \leq \pi_{\tilde{r}+1} \left\|v_{\mu^{r_0}}-v_{\lambda_{\tilde{r}}} \right\|+\sum_{r=1}^{\tilde{r}} \pi_{r+1} \left\|v_{\lambda_{r+1}}-v_{\lambda_{r}}\right\|+2\epsilon C.
\end{equation*}
By definition of $\tilde{r}+1$, $\pi_{\tilde{r}+1} \leq \pi_{\tilde{r}+2}+\theta_{\tilde{r}+1}\leq
\epsilon+\theta_1$. We deduce that
\begin{equation*}
\left\|v_{\theta}-v_{\lambda_1} \right\| \leq \sum_{r=1}^{\tilde{r}}  \left\|v_{\lambda_{r+1}}-v_{\lambda_{r}}\right\|
+2C(2 \epsilon+\theta_1),
\end{equation*}
and the lemma is proved.
\end{proof} 
In Sorin \cite[Proposition 4, p.111]{S04}, by assumption $(\lambda_r)$ is decreasing, and because $(v_{\lambda})$ has bounded variation, the quantity $\sum_{r=1}^{\tilde{r}} \left\|v_{\lambda_{r+1}}-v_{\lambda_{r}}\right\|$ can be made as small as desired.
\\
The problem that we face is that in our framework, $(\lambda_r)$ may not be a decreasing sequence.
Nonetheless, it turns out that $(\lambda_r)$ is ``almost" decreasing, in the following sense: the number of integers $r \in \left\{1,...,\tilde{r}-1\right\}$
such that $\lambda_r<\lambda_{r+1}$ is at most of order $\epsilon^{-2}$. 
\begin{lemma}
Let $r \in \left\{1,...,\tilde{r}-1 \right\}$. We have either
\begin{itemize}
\item
$\pi_{r+1} \leq (1+\epsilon)^{-1} \pi_r$, or
\item
$\lambda_{r} \geq \lambda_{r+1}$.
\end{itemize}
\end{lemma}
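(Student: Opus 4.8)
The plan is to establish the contrapositive of the dichotomy: assuming the second alternative fails, that is $\lambda_r<\lambda_{r+1}$, I will derive the first, $\pi_{r+1}\le(1+\epsilon)^{-1}\pi_r$. First I would transfer everything to the shifted sequence $\mu^r=\theta^{m_r}$, which is a probability measure summing to $1$ and is nonincreasing because $\theta\in\mathcal{D}$. Writing $k:=m_{r+1}-m_r\ge 1$ (finite, since $m_{r+1}=t_{r+1}\le\tilde m$ for $r\le\tilde r-1$), the shift identities give $\lambda_r=\mu^r_1$, $\mu^r_{k+1}=\theta_{m_{r+1}}/\pi_r$, $\pi_{r+1}/\pi_r=\sum_{m\ge k+1}\mu^r_m$ and $\lambda_{r+1}=\mu^r_{k+1}\big(\sum_{m\ge k+1}\mu^r_m\big)^{-1}$. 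Setting $P:=\pi_{r+1}/\pi_r$ and $q:=\mu^r_{k+1}$, the failure of the second alternative reads $P<q/\lambda_r$, so it suffices to bound $P$.

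Next I would split on the size of $q$. If $q\le\lambda_r/(1+\epsilon)$, then directly $P<q/\lambda_r\le(1+\epsilon)^{-1}$, and we are done. The substantial case is $q>\lambda_r/(1+\epsilon)$, which I expect to be the main obstacle, since there the bound $P<q/\lambda_r$ is too weak. Here I would exploit the defining maximality of $k$: by construction $k$ is the largest index at which the discounted weights $\lambda_r(1-\lambda_r)^{m-1}$ stay within a factor $1\pm\epsilon$ of $\mu^r_m$, so $k+1$ violates this, i.e. $\lambda_r(1-\lambda_r)^{k}\notin[(1-\epsilon)q,(1+\epsilon)q]$. Since $(1-\lambda_r)^k\le 1$ and $q>\lambda_r/(1+\epsilon)$ force $\lambda_r(1-\lambda_r)^k<(1+\epsilon)q$, the violation can only occur on the lower side: $\lambda_r(1-\lambda_r)^k<(1-\epsilon)q\le(1-\epsilon)\lambda_r$, whence $(1-\lambda_r)^k<1-\epsilon$.

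Finally I would convert this into a lower bound on the head of $\mu^r$. Bernoulli's inequality gives $1-k\lambda_r\le(1-\lambda_r)^k<1-\epsilon$, hence $k\lambda_r>\epsilon$. Because $\mu^r$ is nonincreasing and $\mu^r_{k+1}=q>\lambda_r/(1+\epsilon)$, every term with index $m\le k$ also exceeds $\lambda_r/(1+\epsilon)$, so $\sum_{m=1}^{k}\mu^r_m>k\lambda_r/(1+\epsilon)>\epsilon/(1+\epsilon)$. As $\mu^r$ sums to $1$, the tail obeys $P=\sum_{m\ge k+1}\mu^r_m<1-\epsilon/(1+\epsilon)=(1+\epsilon)^{-1}$, i.e. $\pi_{r+1}\le(1+\epsilon)^{-1}\pi_r$. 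The delicate point is recognizing that it is precisely the lower-side failure of the discounted approximation at $k+1$ that makes the head heavy; the Bernoulli estimate turning $(1-\lambda_r)^k<1-\epsilon$ into $k\lambda_r>\epsilon$ is what links this failure to a genuine count of large weights, and lets me bound the tail without needing the comparison interval $\{1,\dots,k\}$ to be controlled term by term.
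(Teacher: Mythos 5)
Your proof is correct, and it takes a genuinely different route from the paper's. The paper splits directly on \emph{which} of the two inequalities defining $t_{r+1}$ fails at the index $k+1=m_{r+1}-m_r+1$ of the shifted sequence $\mu^r=\theta^{m_r}$: when the lower bound fails (the discounted weight drops below $(1-\epsilon)\mu^r_{k+1}$), it derives $(1-\lambda_r)^k\leq 1-\epsilon$ exactly as you do, but then lower-bounds the head mass by summing the geometric series $\sum_{m=1}^{k}\lambda_r(1-\lambda_r)^{m-1}=1-(1-\lambda_r)^k\geq\epsilon$ and invoking the upper-side approximation $(1+\epsilon)\mu^r_m\geq\lambda_r(1-\lambda_r)^{m-1}$ at \emph{every} $m\in\{1,\dots,k\}$; when instead the upper bound fails, it obtains $\lambda_r\geq(1+\epsilon)\mu^r_{k+1}$, i.e.\ $\pi_r\lambda_r\geq(1+\epsilon)\pi_{r+1}\lambda_{r+1}$, from which the dichotomy is immediate. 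You argue by contraposition and split instead on the size of $q=\mu^r_{k+1}$ relative to $\lambda_r/(1+\epsilon)$; in your main case, Bernoulli's inequality (turning $(1-\lambda_r)^k<1-\epsilon$ into $k\lambda_r>\epsilon$) combined with monotonicity of $\mu^r$ (each head term exceeds $q>\lambda_r/(1+\epsilon)$) replaces the geometric-sum argument. A concrete merit of your version is that it uses only the \emph{failure} of the two-sided approximation at index $k+1$, never its validity on $\{1,\dots,k\}$: the paper's lower-failure case needs the condition to hold on the whole block, which is guaranteed only if the $\max$ in the definition of $t_{r+1}$ is read as ``the largest $m^*$ such that the condition holds for all $m\leq m^*$'' (the set of indices satisfying the condition need not be an interval under the literal reading), whereas your argument is indifferent to this reading. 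What the paper's route buys in exchange is the explicit inequality $\pi_r\lambda_r\geq(1+\epsilon)\pi_{r+1}\lambda_{r+1}$ in the upper-failure case, which is slightly more quantitative information than the bare dichotomy.
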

\begin{proof}
By definition of $m_{r+1}$, two cases are possible:
\begin{enumerate}
\item
$(1-\epsilon) \theta^{m_r}_{m_{r+1}-m_r+1} > \theta^{m_r}_1(1-\theta^{m_r}_1)^{m_{r+1}-m_r}$ or
\item
$\theta^{m_r}_1(1-\theta^{m_r}_1)^{m_{r+1}-m_r}
>(1+\epsilon) \theta^{m_r}_{m_{r+1}-m_r+1}$
\end{enumerate}
In the first case, because $(\theta_m)_{m \geq 1}$ is decreasing, we have 
\begin{eqnarray*}
(1-\theta^{m_r}_1)^{m_{r+1}-m_r}
\leq (1-\epsilon),
\end{eqnarray*}
Moreover, by definition of $(m_r)$, for all $m \in \left\{1,...,m_{r+1}-m_r\right\}$,
\begin{equation*}
(1+\epsilon) \theta^{m_r}_m \geq \theta^{m_r}_1(1-\theta^{m_r}_1)^{m-1}.
\end{equation*}
Combining these two facts, we get
\begin{eqnarray*}
(1+\epsilon)\sum_{m=1}^{m_{r+1}-m_r} \theta^{m_r}_m
&\geq& \sum_{m=1}^{m_{r+1}-m_r} 
\theta^{m_r}_1(1-\theta^{m_r}_1)^{m-1}
\\
&=&1-(1-\theta^{m_r}_1)^{m_{r+1}-m_r}
\\
& \geq & \epsilon.
\end{eqnarray*}
Consequently,
\begin{eqnarray*}
(1+\epsilon)(1-\pi_{r+1}/\pi_r) \geq \epsilon,
\end{eqnarray*}
thus
\begin{equation*}
\pi_{r+1} \leq \frac{1}{1+\epsilon} \pi_r.
\end{equation*}
In the second case, we have 
\begin{equation*}
\theta^{m_r}_1 \geq (1+\epsilon) \theta^{m_r}_{m_{r+1}-m_r+1},
\end{equation*}
and we deduce that
\begin{eqnarray*}
\pi_r \lambda_{r} \geq (1+\epsilon) \pi_{r+1} \lambda_{r+1}.
\end{eqnarray*}
We deduce that either $\pi_r \geq (1+\epsilon) \pi_{r+1}$, or $\lambda_r \geq \lambda_{r+1}$. 
\end{proof}
Note that $(1+\epsilon)^{-\epsilon^{-2}-1}<\epsilon$ and $\pi_{\tilde{r}} \geq \epsilon$. Consequently, there exists $k \in \m{N}^*$ satisfying $k \leq \epsilon^{-2}$, and $r_1,...,r_k \in \left\{1,...,\tilde{r} \right\}$ such that $r_1=1$, $r_k=\tilde{r}+1$ and
for all $k' \in \left\{1,...,k-1\right\}$, for all $r \in \left\{r_{k'},...,r_{k'+1}-1\right\}$, 
$\lambda_{r+1} \leq \lambda_{r}$. To conclude, let $\beta \in (0,1]$ such that
for all decreasing sequences $(d_r)_{r \geq 1} \in (0,\beta]^{\m{N}^*}$,  
\begin{equation*}
\sum_{r=1}^{+\infty} \left\|v_{d_{r+1}}-v_{d_r} \right\| \leq \epsilon^{3}.
\end{equation*}
Set $\alpha:=\beta \epsilon$, and assume that $\theta_1 \leq \alpha$. Let $r \in \left\{1,...,\tilde{r} \right\}$. Then $\lambda_r \leq \beta \epsilon/\epsilon=\beta$, and thus
\begin{equation*}
\sum_{r=1}^{\tilde{r}}  \left\|v_{\lambda_{r+1}}-v_{\lambda_{r}}\right\| \leq k \epsilon^3 \leq \epsilon,
\end{equation*}
and the theorem is proved.
\subsubsection{Under Assumption (b)}
The proof is a simple adaptation of the proof in \cite[Theorem 3]{Z14}.
Let $\theta \in \mathcal{D}$. For $r \geq 1$, define $\lambda_r:=\theta^r_1=\theta_{r}/\sum_{m' \geq r} \theta_{m'}$. 
We have
\begin{equation*} 
v_{\theta^r}=\Psi^1_{\theta^r}(v_{\theta^{r+1}}),
\end{equation*}
and
\begin{equation*} 
v_{\lambda_r}=\Psi^1_{\lambda_r}(v_{\lambda_r})=\Psi^1_{\theta_r}(v_{\lambda_r}).
\end{equation*}
These two equations and Lemma \ref{iterate} \ref{it1} yield
\begin{eqnarray*}
\left\|v_{\theta^r}-v_{\lambda_r} \right\| &\leq& (1-\lambda_r) \left\|v_{\theta^{r+1}}-v_{\lambda_r} \right\|.
\end{eqnarray*}
This equation is the same as Equation (11) in \cite[p.9]{Z14}. After that, the proof is identical.

\section{Applications to zero-sum stochastic games} \label{app}

\subsection{Zero-sum stochastic games}
Consider a general model of zero-sum stochastic game, as described in Maitra and Parthasarathy \cite{MP70}.
It is described by a state space $K$, which is a Borel subset of a Polish space, a measurable action set $I$ (resp. $J$) for Player 1 (resp. 2), that is a Borel subset of a Polish space, a Borel measurable transition $q: K \times I \times J \rightarrow \Delta(K)$, and a bounded Borel measurable payoff function $g:K \times I \times J \rightarrow \m{R}$.

Given an initial state $k_1 \in K$, the stochastic game $\Gamma^{k_1}$ that starts in $k_1$ proceeds as follows. At each stage $m \geq 1$, both players choose simultaneously and independently an action, $i_m \in I$ (resp. $j_m \in J$) for Player 1 (resp. 2). The payoff at stage $m$ is $g_m:=g(k_m,i_m,j_m)$.
The state $k_{m+1}$ of stage $m+1$ is drawn from the probability distribution $q(k_m,i_m,j_m)$. Then $(k_{m+1},i_m,j_m)$ is made public to players.
\\
The set of all possible histories before stage $m$ is
$H_m:=(K \times I \times J)^{m-1} \times K$. A \textit{behavioral strategy} for Player 1 (resp. 2) is a Borel measurable mapping $\displaystyle \sigma:\cup_{m \geq 1} H_m \rightarrow \Delta(I)$ (resp. $\displaystyle \tau:\cup_{m \geq 1} H_m \rightarrow \Delta(J)$).
\\
A triple $(k_1,\sigma,\tau) \in K \times \Sigma \times \mathcal{T}$ naturally induces a probability measure on $H_\infty:=(K \times I  \times J)^{\m{N}^*}$, denoted by $\mathbb{P}^{k_1}_{\sigma,\tau}$. Let $\theta \in \Delta(\m{N}^*)$. The $\theta-weighted \ game$ $\Gamma^{k_1}_\theta$ is the game defined by its normal form $(\Sigma,\mathcal{T},\gamma_{\theta}^{k_1})$, where 
\begin{equation*}
\gamma^{k_1}_{\theta}(\sigma,\tau):=\mathbb{E}^{k_1}_{\sigma,\tau}\left(\sum_{m \geq 1} \theta_m g_m \right).
\end{equation*}
When $\theta=\theta(n)$ for some $n \in \m{N}^*$, the game $\Gamma_n:=\Gamma_{\theta}$ is called the \textit{$n$-stage game}, and its payoff function is denoted by $\gamma_n:=\gamma_{\theta}$. When $\displaystyle \theta=\theta(\lambda)$ for some $\lambda \in (0,1]$, the game $\Gamma_{\lambda}:=\Gamma_{\theta}$ is called the \textit{$\lambda$-discounted game}, and its payoff function is denoted by $\gamma_{\lambda}$.
Let $f: K \rightarrow \m{R}$ be a bounded Borel measurable function, and $(k,x,y) \in K \times \Delta(I) \times \Delta(J)$. Define 
\begin{equation*} \label{deftrans}
\m{E}^{k}_{x,y}(f):=\int_{(k',i,j) \in K \times I \times J} f(k') dq(k,i,j)(k')dx(i) dy(j)
\end{equation*}
and 
\begin{equation*} \label{defpayoff}
\displaystyle g(k,x,y):=\int_{(i,j) \in I \times J} g(k,i,j) dx(i) dy(j) .
\end{equation*}
We make the following assumption:
\begin{assumption} \label{assdyn}
For all $k_1 \in K$ and $\theta \in \Delta(\m{N}^*)$, the game $\Gamma^{k_1}_{\theta}$ has a value, that is, there exists a real number $v_{\theta}(k_1)$ such that:
\begin{equation*}
v_{\theta}(k_1)=\sup_{\sigma \in \Sigma} \inf_{\tau \in \mathcal{T}} \gamma^{k_1}_{\theta}(\sigma,\tau)
=\inf_{\tau \in \mathcal{T}} \sup_{\sigma \in \Sigma} \gamma^{k_1}_{\theta}(\sigma,\tau).
\end{equation*}
Moreover, for all $k_1 \in K$ and $\theta \in \Delta(\m{N}^*)$, the mapping $(v_{\theta})$ is Borel measurable and satisfies:
\begin{eqnarray} \label{dyn11}
v_{\theta}(k_1)&=& \sup_{x \in \Delta(I)} \inf_{y \in \Delta(J)}
\left\{ \theta_1 g(k_1,x,y)+(1-\theta_1)\m{E}^{k_1}_{x,y}(v_{\hat{\theta}}) \right\}
\\
\label{dyn12}
&=& \inf_{y \in \Delta(J)} \sup_{x \in \Delta(I)} 
\left\{ \theta_1 g(k_1,x,y)+(1-\theta_1)\m{E}^{k_1}_{x,y}(v_{\hat{\theta}}) \right\}.
\end{eqnarray}
\end{assumption}
The game $\Gamma$ has an \textit{asymptotic value} $v^*$ if both $(v_n)$ and $(v_{\lambda})$ converge uniformly to $v^*$.
Let $X$ be the set of bounded Borel measurable functions from $K$ to $\m{R}$, equipped with the uniform norm, and for all $(f,k) \in X \times K$, define
\begin{equation*}
 \Psi(\lambda,f)(k):=\sup_{x \in \Delta(I)} \inf_{y \in \Delta(J)} \left\{\lambda g(k,x,y)+(1-\lambda)\m{E}^k_{x,y}(f) \right\}. 
\end{equation*}
 We make the following assumption:
\begin{assumption} \label{assmes}
For all $\lambda \in [0,1]$ and $f \in X$, $\Psi(\lambda,f)$ is Borel measurable.
\end{assumption}
\begin{remark}
When $I$ and $J$ are compact metric spaces, and for all $k \in K$, $q(k,.)$ and $g(k,.)$ are jointly continuous, then Assumptions 2 and 3 hold (see \cite[Proposition VII.1.4, p.394]{MSZ}). Maitra and Parthasarathy \cite{MP70} provide even weaker conditions under which Assumptions 2 and 3 hold in the $n$ stage-case and the $\lambda$-discounted case, which generalize immediately to the $\theta$-weighted case.
\end{remark}

\subsection{Tauberian theorem for value functions}
Assume that Assumptions \ref{assdyn} and \ref{assmes} are in force. 
\\
The set $X$ is a Banach space, and Assumption 3 ensures that $\Psi$ is well defined from $[0,1] \times X$ to $X$. Moreover, $\Psi$ satisfies (\ref{nonexp}) and Assumption \ref{assop} for $C:=\sup_{(k,i,j) \in K \times I \times J} \left|g(k,i,j)\right|$. Thus, Theorems \ref{taubgen} and \ref{taubbv} apply to $\Psi$. By Assumption 2, the family of values $(v_{\theta})$ satisfies equation (\ref{recop}). Consequently, the two following theorems hold: 
\begin{theorem} 
Assume that $\Gamma$ has an asymptotic value $v^*$.
Then for all $\epsilon>0$, for all $p \in \m{N}^*$, there exists $\alpha>0$ such that for all $\theta \in \Delta(\m{N}^*)$,
\begin{equation*}
 I_p(\theta) \leq \alpha \Rightarrow
\left\|v_{\theta}-v^* \right\|  \leq \epsilon.
\end{equation*}
\end{theorem}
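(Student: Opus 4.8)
The plan is to recognize this statement as a direct specialization of the abstract Theorem \ref{taubgen} to the Shapley operator $\Psi$ of the game. The entire task reduces to two verifications: first, that $\Psi$ satisfies the hypotheses required by Theorem \ref{taubgen}, namely (\ref{nonexp}) and Assumption \ref{assop}; and second, that the family of game values $(v_\theta)$ coincides with the abstract family produced by Proposition \ref{existence}, so that the phrase ``$\Gamma$ has asymptotic value $v^*$'' translates into ``$\Psi$ has asymptotic value $v^*$'' in the operator sense of Proposition \ref{taub}. Once both are in place, Theorem \ref{taubgen} can be quoted without further work.

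For the first verification, Assumption \ref{assmes} guarantees that $\Psi(\lambda,f) \in X$ for every $\lambda \in [0,1]$ and $f \in X$, so $\Psi$ is a well-defined map $[0,1] \times X \to X$. Since $\Psi$ has exactly the sup-inf form described in Remark \ref{rkshapley}, with $\m{E}^k_{x,y}$ a positive linear form of norm at most $1$ and $|g| \leq C := \sup_{(k,i,j)} |g(k,i,j)|$, it automatically satisfies (\ref{nonexp}) and Assumption \ref{assop}. Concretely, to check Assumption \ref{assop} I would apply, pointwise at each $k$, the standard estimate $|\sup_x \inf_y A(x,y) - \sup_x \inf_y B(x,y)| \leq \sup_{x,y} |A(x,y) - B(x,y)|$ to $A = \alpha\lambda g + \alpha(1-\lambda)\m{E}^k_{x,y}(f)$ and $B = \beta\lambda' g + \beta(1-\lambda')\m{E}^k_{x,y}(g)$ (using $\alpha,\beta \geq 0$ to move the scalars inside the sup-inf), and then split the bound into a payoff term controlled by $C|\alpha\lambda - \beta\lambda'|$ and an expectation term controlled by $\|\alpha(1-\lambda)f - \beta(1-\lambda')g\|$ via $\|\m{E}^k_{x,y}\| \leq 1$.

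For the second verification, Assumption \ref{assdyn} — in particular the recursion (\ref{dyn11})--(\ref{dyn12}) — shows precisely that $v_\theta = \Psi(\theta_1, v_{\hat\theta})$ for every $\theta$, i.e.\ the game-value family satisfies (\ref{recop}), and it is bounded by $C$. By the uniqueness part of Proposition \ref{existence}, this family is exactly the one constructed abstractly, so the game values $v_n$, $v_\lambda$, $v_\theta$ agree with their operator counterparts. The hypothesis that $\Gamma$ has asymptotic value $v^*$ means that $(v_n)$ and $(v_\lambda)$ converge uniformly to $v^*$; since the ambient norm on $X$ is the uniform norm, this is exactly the statement that $\Psi$ has asymptotic value $v^*$. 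Theorem \ref{taubgen} then applies and delivers the claimed implication $I_p(\theta) \leq \alpha \Rightarrow \|v_\theta - v^*\| \leq \epsilon$.

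I do not expect a serious obstacle: all the mathematical content lives in Theorem \ref{taubgen}, and this statement is merely its instantiation for Shapley operators. The only points requiring any care are measurability — ensuring $\Psi$ does not leave $X$, which is precisely what Assumption \ref{assmes} provides — and the bookkeeping in the sup-inf estimate for Assumption \ref{assop}; both are routine and have already been anticipated in the remark following Assumption \ref{assop}.
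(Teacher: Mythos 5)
Your proposal is correct and follows essentially the same route as the paper: the paper's entire argument is the paragraph preceding the theorem, which notes that $X$ is a Banach space, that Assumption \ref{assmes} makes $\Psi$ well defined, that $\Psi$ satisfies (\ref{nonexp}) and Assumption \ref{assop} with $C=\sup|g|$, and that by Assumption \ref{assdyn} the game values satisfy (\ref{recop}) and hence coincide with the operator family, so Theorem \ref{taubgen} applies. Your write-up is in fact slightly more careful than the paper's, since you spell out the sup-inf estimate behind Assumption \ref{assop} and explicitly invoke the uniqueness part of Proposition \ref{existence} for the identification of the two families, both of which the paper leaves implicit.
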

\begin{theorem} 
Assume that $(v_{\lambda})$ has bounded variation. Then $\Gamma$ has an asymptotic value $v^*$. Moreover, for all $\epsilon>0$, there exists $\alpha>0$ such that for all $\theta \in \mathcal{D}$,
\begin{equation*}
 \sup_{m \geq 1} \theta_m \leq \alpha \Rightarrow
\left\|v_{\theta}-v^* \right\|  \leq \epsilon.
\end{equation*}
\end{theorem}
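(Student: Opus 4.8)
The plan is to deduce this statement directly from the operator-level Theorem \ref{taubbv}, applied to the Shapley operator of the game. First I would record that, with $X$ the Banach space of bounded Borel functions on $K$ under the uniform norm and $\Psi$ the operator defined by $\Psi(\lambda,f)(k)=\sup_{x}\inf_{y}\{\lambda g(k,x,y)+(1-\lambda)\m{E}^k_{x,y}(f)\}$, the map $\Psi$ falls into the class of Remark \ref{rkshapley}: here $\m{E}^k_{x,y}$ is a linear form of norm at most $1$, being an expectation against a probability measure, and $g$ is bounded. Hence $\Psi$ satisfies (\ref{nonexp}) and, being a Shapley operator, Assumption \ref{assop}, with $C:=\sup_{(k,i,j)}|g(k,i,j)|$. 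Assumption \ref{assmes} guarantees that $\Psi$ indeed maps into $X$, so all the hypotheses required to invoke the abstract theory are in place.

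The second step is to identify the game values $(v_\theta)$ with the abstract family of Proposition \ref{existence}. By Assumption \ref{assdyn}, the value functions are Borel measurable, uniformly bounded by $C$, and satisfy the recursion $v_\theta=\Psi(\theta_1,v_{\hat\theta})$ of equations (\ref{dyn11})--(\ref{dyn12}), which is exactly (\ref{recop}). The uniqueness part of Proposition \ref{existence} then forces this family to coincide with the abstract one. In particular $v_\lambda=v_{\theta(\lambda)}$ for every $\lambda\in(0,1]$, so the bounded variation hypothesis on the game's $(v_\lambda)$ is literally the bounded variation hypothesis appearing in part (\ref{bv1}) of Theorem \ref{taubbv}.

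With the identification in hand, I would simply apply Theorem \ref{taubbv} under Assumption (\ref{bv1}): since $(v_\lambda)$ has bounded variation and $\Psi$ satisfies Assumption \ref{assop}, the theorem yields an element $v^*\in X$ that is the asymptotic value of $\Psi$, together with the desired quantitative statement for every $\theta\in\mathcal{D}$ with $\sup_{m}\theta_m$ small. It remains only to translate the conclusion back to the game. Because the norm on $X$ is the uniform norm, convergence of $(v_n)$ and $(v_\lambda)$ to $v^*$ in $X$ is precisely uniform convergence, which is the definition of $\Gamma$ having asymptotic value $v^*$; and the implication $\sup_m\theta_m\le\alpha\Rightarrow\|v_\theta-v^*\|\le\epsilon$ carries over verbatim.

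The proof is therefore essentially a matter of verification and translation: the genuine analytic work is already carried out in Theorem \ref{taubbv}. The one point deserving care is the identification step, where the uniqueness clause of Proposition \ref{existence} is what allows the bounded variation property of the game's discounted values to be transferred to the abstract operator framework; without it one could not guarantee that the two notions of $(v_\lambda)$ agree, and the inheritance of the hypothesis would fail.
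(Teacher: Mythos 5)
Your proposal is correct and follows essentially the same route as the paper: verify that the Shapley operator $\Psi$ of the game satisfies (\ref{nonexp}) and Assumption \ref{assop} with $C=\sup|g|$, identify the game's values $(v_\theta)$ with the abstract family of Proposition \ref{existence} via the recursion (\ref{recop}), and then invoke Theorem \ref{taubbv} under hypothesis (\ref{bv1}). Your explicit appeal to the uniqueness clause of Proposition \ref{existence} to justify the identification is a point the paper leaves implicit, but it is the same argument.
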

The above theorem was already known. Indeed, if $(v_{\lambda})$ has bounded variation, then the uniform value exists (see \cite{MN81}). This implies the above theorem (see Theorem 1 and Remark 4 in Neyman and Sorin \cite{NS10}). Nonetheless, the alternative proof that is provided in this paper is shorter and more elementary. 
\subsection{Asymptotically optimal strategies in finite stochastic games} \label{secfrac}
Assume that $K$, $I$ and $J$ are finite. Because $(v_{\lambda})$ can be expanded in Puiseux series (see \cite{BK76}), there exists $C>0$ and $s>0$ such that for all $\lambda,\lambda' \in (0,1]$, 
\begin{equation} \label{puiseux}
\left\|v_{\lambda}-v_{\lambda'}\right\| \leq C\left|\lambda^s-{\lambda'}^s \right|.
\end{equation}
Let $k \in K$. Denote by $X^*_{\lambda}(k)$ the set of $x \in \Delta(I)$ such that
\begin{eqnarray*} 
v_{\lambda}(k)&=&  \inf_{y \in \Delta(J)}
\left\{ \lambda g(k,x,y)+(1-\lambda)\m{E}^k_{x,y}(v_{\lambda}) \right\}.
\end{eqnarray*}
Denote by $Y^*_{\lambda}(k)$ the set of $y \in \Delta(J)$ such that
\begin{eqnarray*} 
v_{\lambda}(k)&=&  \sup_{x \in \Delta(I)}
\left\{ \lambda g(k,x,y)+(1-\lambda)\m{E}^k_{x,y}(v_{\lambda}) \right\}.
\end{eqnarray*}

\begin{definition}
Let $\theta \in \Delta(\m{N}^*)$. As in the previous section, for $m \in \m{N}^*$, define $\lambda_m:=\theta_m/\sum_{m' \geq m} \theta_{m'}$ if $\theta_m \neq 0$, and $\lambda_m:=0$ otherwise. A strategy $\sigma^{\theta} \in \Sigma$ for Player 1 is $\theta$-\textit{discounted} if for all $ m \geq 1$ and $h_m \in H_m$, $\sigma(h_m) \in X^*_{\lambda_m}(k_m)$. 
Similarly, a strategy $\tau \in \mathcal{T}$ for Player 2 is $\theta$-\textit{discounted} if for all $ m \geq 1$ and $h_m \in H_m$, $\tau(h_m) \in Y^*_{\lambda_m}(k_m)$. 
\end{definition}
The following theorem shows that $\theta$-discounted strategies are asymptotically optimal in $\Gamma_{\theta}$, as $\theta_1$ goes to 0. 
\begin{theorem}
For all $\epsilon>0$, there exists $\alpha>0$ such that for all $\theta \in \mathcal{D}$, for all
$k_1 \in K$, for all $\theta$-discounted strategies $\sigma^{\theta} \in \Sigma$ and $\tau^{\theta} \in \mathcal{T}$, for all $\sigma \in \Sigma$ and $\tau \in \mathcal{T}$, 
\begin{equation*}
 \sup_{m \geq 1} \theta_m=\theta_1 \leq \alpha \Rightarrow
\gamma^{k_1}_{\theta}(\sigma^{\theta},\tau)\geq v^*(k_1)-\epsilon \quad \text{and} \quad
\gamma^{k_1}_{\theta}(\sigma,\tau^{\theta})  \leq v^*(k_1)+\epsilon.
\end{equation*}

\end{theorem}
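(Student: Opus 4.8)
The plan is to follow the route announced in the introduction: the worst payoff guaranteed by a $\theta$-discounted strategy obeys a Shapley-type functional equation, so the statement reduces to Theorem \ref{taubbv}. I will carry out the argument for Player 1 (the inequality $\gamma^{k_1}_{\theta}(\sigma^{\theta},\tau)\geq v^*(k_1)-\epsilon$); the bound for Player 2 follows by the symmetric construction. First I would introduce the operator $\underline{\Psi}:[0,1]\times X\rightarrow X$ defined by
\[
\underline{\Psi}(\lambda,f)(k):=\inf_{x\in X^*_{\lambda}(k)}\ \inf_{y\in\Delta(J)}\left\{\lambda g(k,x,y)+(1-\lambda)\m{E}^k_{x,y}(f)\right\}.
\]
Since $X^*_{\lambda}(k)$ does not depend on $f$, for each fixed $\lambda$ this is an infimum of affine $(1-\lambda)$-Lipschitz maps, so $\underline{\Psi}$ satisfies (\ref{nonexp}); boundedness of $g$ gives (\ref{reg}) with $C:=\sup_{(k,i,j)}|g(k,i,j)|$. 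Because of the $\lambda$-dependence of the constraint set $X^*_{\lambda}$, the operator $\underline{\Psi}$ cannot be written as $\lambda\Phi(\lambda^{-1}(1-\lambda)f)$ for a fixed $1$-Lipschitz $\Phi$: this is exactly the situation for which the extra generality of Remark \ref{rknonexp} is needed. By Proposition \ref{existence} there is then a unique bounded family $(w_{\theta})_{\theta}$ with $w_{\theta}=\underline{\Psi}(\theta_1,w_{\hat{\theta}})$.

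The key identification is $w_{\theta(\lambda)}=v_{\lambda}$. Indeed, for $\theta=\theta(\lambda)$ one has $\lambda_m=\lambda$ for all $m$ and $\widehat{\theta(\lambda)}=\theta(\lambda)$, so $w_{\theta(\lambda)}$ is the unique fixed point of the $(1-\lambda)$-contraction $\underline{\Psi}(\lambda,\cdot)$; on the other hand every $x\in X^*_{\lambda}(k)$ attains the value in the Shapley equation for $v_{\lambda}$, so $v_{\lambda}$ is also a fixed point, whence equality. Consequently the Puiseux estimate (\ref{puiseux}) states precisely that the family associated with $\underline{\Psi}$ satisfies Assumption \ref{bv2}. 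Applying Theorem \ref{taubbv}(b) to $\underline{\Psi}$ then gives that $\underline{\Psi}$ has asymptotic value $\lim_{\lambda\rightarrow0}v_{\lambda}=v^*$, and that for every $\epsilon>0$ there is $\alpha>0$ with $\|w_{\theta}-v^*\|\leq\epsilon$ for all $\theta\in\mathcal{D}$ such that $\theta_1\leq\alpha$.

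It remains to check that $w_{\theta}$ is a lower bound for the payoff, which I would do by telescoping. Fix $\theta\in\mathcal{D}$, $k_1$, a $\theta$-discounted $\sigma^{\theta}$ and an arbitrary $\tau$, and write $\pi_m:=\sum_{m'\geq m}\theta_{m'}$, so that $\lambda_m=\theta_m/\pi_m$ and $(1-\lambda_m)\pi_m=\pi_{m+1}$. Since $w_{\theta^m}=\underline{\Psi}(\lambda_m,w_{\theta^{m+1}})$ and $\sigma^{\theta}$ plays some $x_m\in X^*_{\lambda_m}(k_m)$, for the action $y_m=\tau(h_m)$ actually played the definition of the infimum yields
\[
\pi_m\,w_{\theta^m}(k_m)\leq\theta_m\,g(k_m,x_m,y_m)+\pi_{m+1}\,\m{E}^{k_m}_{x_m,y_m}(w_{\theta^{m+1}}).
\]
Taking the expectation $\mathbb{E}^{k_1}_{\sigma^{\theta},\tau}$ and setting $W_m:=\mathbb{E}^{k_1}_{\sigma^{\theta},\tau}[\pi_m w_{\theta^m}(k_m)]$, this reads $W_m-W_{m+1}\leq\theta_m\,\mathbb{E}^{k_1}_{\sigma^{\theta},\tau}[g_m]$. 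Summing over $m\leq n$, using $W_1=w_{\theta}(k_1)$ and $|W_{n+1}|\leq C\pi_{n+1}\rightarrow0$, and letting $n\rightarrow+\infty$ gives $w_{\theta}(k_1)\leq\gamma^{k_1}_{\theta}(\sigma^{\theta},\tau)$. Combined with the previous paragraph, $\gamma^{k_1}_{\theta}(\sigma^{\theta},\tau)\geq v^*(k_1)-\epsilon$ as soon as $\theta_1\leq\alpha$; the symmetric operator $\overline{\Psi}(\lambda,f)(k):=\sup_{y\in Y^*_{\lambda}(k)}\sup_{x\in\Delta(I)}\{\lambda g(k,x,y)+(1-\lambda)\m{E}^k_{x,y}(f)\}$ gives $\gamma^{k_1}_{\theta}(\sigma,\tau^{\theta})\leq v^*(k_1)+\epsilon$.

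I expect the main obstacle to be conceptual rather than computational: recognizing that the guaranteed payoff of a discounted strategy is governed by the nonstandard operator $\underline{\Psi}$ (satisfying (\ref{nonexp})--(\ref{reg}) but lying outside the usual Shapley class), and verifying the identification $w_{\theta(\lambda)}=v_{\lambda}$ that feeds the Puiseux bound into Theorem \ref{taubbv}. Once this is set up, the Tauberian theorem does the analytic work and the telescoping estimate is routine, modulo minor care about the existence of the infimum over $X^*_{\lambda}$ and about the vanishing of the tail term $W_{n+1}$.
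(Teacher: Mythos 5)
Your proposal is correct and takes essentially the same route as the paper: the same auxiliary operator $\underline{\Psi}$ built from the optimal-action sets $X^*_{\lambda}(k)$, the same identification of its discounted family with $v_{\lambda}$, and the same appeal to Theorem \ref{taubbv} (via the Puiseux estimate, i.e.\ hypothesis (b)). The only difference is bookkeeping: the paper defines $w_{\theta}$ directly as the payoff guaranteed by $\theta$-discounted strategies and asserts that it satisfies the functional equation, whereas you construct $w_{\theta}$ abstractly via Proposition \ref{existence} and then supply the one-sided telescoping inequality $w_{\theta}(k_1)\leq\gamma^{k_1}_{\theta}(\sigma^{\theta},\tau)$ — a verification the paper leaves implicit.
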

\begin{proof}
Let us prove the result for Player 1, the result for Player 2 follows by symmetry. 
Let $X$ be the set of  functions from $K$ to $\m{R}$, equipped with the uniform norm, and for all $(f,k) \in X \times K$, define
\begin{equation*}
 \Psi(\lambda,f)(k):=\inf_{x \in X_{\lambda}^*(k)} \inf_{y \in \Delta(J)} \left\{\lambda g(k,x,y)+(1-\lambda)\m{E}^k_{x,y}(f) \right\}.
 \end{equation*}
 The operator $\Psi$ satisfies equations (\ref{nonexp}) and (\ref{reg}). 
 \\
For $k \in K$ and $\theta \in \mathcal{D}$, define
\begin{equation*}
w_{\theta}(k):=\inf_{\sigma \ \theta-\text{discounted}} \inf_{\tau \in \mathcal{T}}\gamma^{k}_{\theta}(\sigma,\tau). 
\end{equation*}
For all $\theta \in \Delta(\m{N}^*)$, we have
\begin{equation*}
w_{\theta}=\Psi(\theta_1,w_{\theta}).
\end{equation*}
Moreover, for all $\lambda \in (0,1]$, $w_{\lambda}=v_{\lambda}$, where $v_{\lambda}$ is the value of the $\lambda$-discounted game.  
Applying Theorem \ref{taubbv} to $\Psi$ proves the result.
\end{proof}
\begin{remark}
To the best of our knowledge, the above result was not known even when $\theta$ is restricted to be a $n$-stage evaluation. 
\end{remark}
\begin{remark}
For each $\lambda \in (0,1]$, there exists a stationary optimal strategy $s^*(\lambda)$ (a strategy that depends only on the current state) that is optimal in the game $\Gamma_{\lambda}(k_1)$, for any $k_1 \in K$ (see \cite{SH53}). For $\theta \in \mathcal{D}$, consider the following strategy in $\Gamma_{\theta}$: at stage $m$, play the strategy $s^*(\lambda_m)$. By the previous theorem, this strategy is asymptotically optimal, as $\theta_1$ goes to 0. Thus, this theorem provides a simple way to build asymptotically optimal strategies in weighted games.
\end{remark}
\section{An example} \label{secex}
In this section, we provide an example of a 1-Player stochastic game (Markov Decision Process) that has an asymptotic value, and such that there exists $(\theta^N) \in \mathcal{D}^{\m{N}^*}$ satisfying $\lim_{N \rightarrow +\infty} \theta^N_1=0$, but such that $(v_{\theta^N})$ does not converge. For all $N \geq 1$, $\theta^N$ is piecewise constant, but the number of pieces grows to infinity as $N$ goes to infinity. This shows first that even if we restrict to decreasing sequences of weights, it is crucial that in Theorem \ref{taubgen}, the number of pieces is bounded. Second, this example shows that in Theorem \ref{taubbv}, the bounded variation assumption can not be relaxed. Third, this gives a negative answer to a question raised in Renault \cite{R14}. 
\\

Let $\Gamma$ be the following MDP: the set space is $K:=\left\{0,1 \right\} \times \m{N}^* \cup \left\{0^*\right\}$, and the action set is $I:=\left\{C,Q\right\}$. The payoff function is equal to $0$ in states $\left\{0\right\} \times \m{N}^*$, and equal to $1$ in states $\left\{1 \right\} \times \m{N}^*$. The state $0^*$ is an absorbing state: once in $0^*$, the game remains forever in it, and the payoff is 0. 
\\
Let $M_0:=10^{(10^{10})}$. The Dirac measure at $k \in K$ is denoted by $\delta_k$. 
The transition function $q$ is the following: for all $m \in \m{N}^*$,
\begin{eqnarray*}
&&q((0,m),C):=(0,m+1)
\\
&&q((0,m),Q):=[1-(1/\ln(\ln(\ln(m+M_0)))] \cdot \delta_{(1,m^2)}+1/\ln(\ln(\ln(m+M_0))) \cdot \delta_{0^*}
\\
&&q((1,m),.):=(1,m-1)
\\
&&q((1,1),.):=(0,1).
\end{eqnarray*}
The idea of the game starting from state $(0,1)$ is the following: the decision-maker plays $C$ for a certain period of time $m$, and meanwhile gets payoff 0, then he plays $Q$. At this point, the state is absorbed in $0^*$ with probability $1/\ln(\ln(\ln(m+m_0)))$, otherwise goes to state $(1,m^2)$. Then, he enjoys a payoff 1 during $m^2$ stages, and afterwards the game is again in state $(0,1)$. 
\\
Let us first prove that $\Gamma$ has an asymptotic value:
\begin{proposition}
The family $(v_n)$ converges uniformly to the mapping $v^*$ such that $v^*(k)=1$ for all $k \neq 0^*$ and $v^*(0^*)=0$. Consequently, $\Gamma$
has an asymptotic value.
\end{proposition}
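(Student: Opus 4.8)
The plan is to establish the uniform convergence $v_n \to v^*$ directly, and then obtain the asymptotic value from the Tauberian Proposition \ref{taub}. Two of the three values of $v^*$ are immediate: since $0^*$ is absorbing with payoff $0$ we have $v_n(0^*)=0=v^*(0^*)$ for every $n$, and since all payoffs lie in $[0,1]$ we have $v_n(k)\le 1=v^*(k)$ for every $k\neq 0^*$. Hence everything reduces to the \emph{uniform lower bound} $\inf_{k\neq 0^*} v_n(k)\to 1$, which is where essentially all the work lies.

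For the lower bound I would exhibit a single explicit ``one big cycle'' strategy. Write $p_m:=1/\ln(\ln(\ln(m+M_0)))$ for the absorption probability at $(0,m)$ under $Q$; note that $p_m$ is nonincreasing in $m$ and $p_m\to 0$ as $m\to\infty$ (this is the only property of the triple logarithm that matters here). Given the horizon $n$, let $m_n$ be the largest integer with $m_n+m_n^2\le n$; then $m_n\to\infty$ and, comparing with $m_n+1$, one checks $m_n^2/n\to 1$. From a state $(0,j)$ the decision-maker plays $C$ until reaching $(0,\max(j,m_n))$ and then plays $Q$: this costs at most $m_n$ stages of payoff $0$, after which, unless absorbed, the chain deterministically traverses at least $m_n^2$ consecutive payoff-$1$ states. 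Since the $Q$ is played at a state with index $\ge m_n$, the survival probability is at least $1-p_{m_n}$, and since $(\max(j,m_n))^2\ge m_n^2$ and $m_n^2\le n-1$ the good phase fits inside the horizon. Hence the expected number of payoff-$1$ stages is at least $(1-p_{m_n})m_n^2$, giving $v_n((0,j))\ge (1-p_{m_n})m_n^2/n$, a bound independent of $j$ that tends to $1$.

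It remains to treat the states $(1,j)$. From $(1,j)$ the transitions are deterministic and yield payoff $1$ for the next $j$ stages before returning to $(0,1)$; riding this prefix and then applying the previous strategy with the residual horizon $n-j$ gives $v_n((1,j))\ge\big(j+(1-p_{m_{n-j}})m_{n-j}^2\big)/n$ when $j<n$ (and $v_n((1,j))=1$ when $j\ge n$). A short case split finishes the uniform estimate: if $n-j\le\sqrt n$ the term $j/n\ge 1-1/\sqrt n$ already suffices, while if $n-j>\sqrt n$ then $n-j\to\infty$ uniformly, so $(1-p_{m_{n-j}})m_{n-j}^2\ge (n-j)(1-\eta_n)$ with $\eta_n:=\sup_{n'>\sqrt n}\big(1-(1-p_{m_{n'}})m_{n'}^2/n'\big)\to 0$ (a tail supremum of a null nonnegative sequence), whence the average is at least $1-\eta_n$. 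Combining the two families of states yields $\inf_{k\neq 0^*}v_n(k)\to 1$, i.e. $v_n\to v^*$ uniformly.

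Finally, to conclude that $\Gamma$ has an asymptotic value I would invoke Proposition \ref{taub}: the dynamic-programming operator of this MDP is a one-player Shapley operator and therefore satisfies Assumption \ref{assop}, and by Assumption \ref{assdyn} the game values form the family characterized by (\ref{recop}). Since $(v_n)$ converges in the uniform norm, Proposition \ref{taub} gives that $(v_\lambda)$ converges to the same limit $v^*$, so both $(v_n)$ and $(v_\lambda)$ converge uniformly to $v^*$ and $\Gamma$ has asymptotic value $v^*$. The main obstacle throughout is the \emph{uniformity} of the lower bound over all initial states --- in particular the intermediate states $(1,j)$ --- which forces the calibrated choice $m_n\sim\sqrt n$ and the case analysis above; the role of $p_m\to 0$ is exactly to make a single cycle of near-optimal length survive with probability close to $1$.
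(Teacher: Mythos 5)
Your proof is correct and follows essentially the same route as the paper: the paper's proof also exhibits the threshold strategy ``play $C$ until reaching a state $(0,m)$ with $m\geq n^{1/2}$, then play $Q$ once'', obtains the uniform lower bound $(1-n^{-1/2}-n^{-1})\bigl[1-1/\ln(\ln(\ln(n^{1/2}+M_0)))\bigr]$ valid for every initial state, and lets $n\to\infty$. Your calibration $m_n+m_n^2\le n$, the explicit case analysis for the states $(1,j)$, and the explicit appeal to Proposition \ref{taub} for the ``consequently'' part are just more detailed write-ups of steps the paper leaves implicit.
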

\begin{proof}
For $n \in \m{N}^*$, consider the following stationary strategy in the $n$-stage game: play $C$ until the state is $(0,m)$ with $m \geq n^{1/2}$, then play $Q$, and afterwards play $C$ forever. Under such a strategy, whatever be the initial state, the expected payoff is larger than 
\begin{equation*}
(1-n^{-1/2}-n^{-1})\left[1-1/\ln(\ln(\ln(n^{1/2}+M_0)))\right].
\end{equation*}
This quantity goes to 1 as $n$ goes to infinity. 
\end{proof}
\begin{proposition}
There exists a sequence $(\theta^N) \in \mathcal{D}^{\m{N}^*}$ such that $(\theta^N_1)$ goes to 0 but the real sequence $(v_{\theta^N}(0,1))$ converges to 0. 
\end{proposition}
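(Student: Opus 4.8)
The plan is to construct explicit decreasing, piecewise-constant weights $\theta^N$ forcing a sharp trade-off: collecting a non-negligible share of the weight on the payoff-$1$ states requires passing many quitting gates, each carrying an irreducible absorption risk, so that any strategy surviving long enough collects only a vanishing amount of weight. Concretely, for $N\ge 1$ fix thresholds $1=T_0<T_1<\dots<T_N$ growing at triple-exponential speed, say $T_k:=\lceil\exp\exp\exp(\sqrt k)\rceil$, so that $\ln\ln\ln(T_k+M_0)$ is of order $\sqrt k$; set $B_k:=\{T_{k-1},\dots,T_k-1\}$ and let $\theta^N$ equal $a_k:=\big(N(T_k-T_{k-1})\big)^{-1}$ on $B_k$ for $1\le k\le N$, and $\theta^N_m:=0$ for $m\ge T_N$. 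Each block then carries weight $1/N$; the block lengths increase, so $(a_k)_k$ decreases and $\theta^N\in\mathcal D$; it is $N$-piecewise constant with $N\to\infty$ pieces; and $\theta^N_1=a_1\le 1/N\to 0$. Since $v_{\theta^N}(0,1)\ge 0$, everything reduces to the upper bound $v_{\theta^N}(0,1)\to 0$.

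Next I would exploit the forced trajectory structure. Under any strategy the path up to absorption in $0^*$ is a succession of \emph{cycles}: from $(0,1)$ the player climbs with $C$ to some state $(0,m)$, plays $Q$, and with probability $1-q_m$, where $q_m:=1/\ln\ln\ln(m+M_0)$, runs a payoff-$1$ excursion of length $m^2$ before returning to $(0,1)$ (and is absorbed with probability $q_m$). If a cycle quits at $(0,m)$ after starting at calendar stage $\tau\ge 1$, its reward phase occupies the stage interval $[\,\tau+m,\ \tau+m+m^2\,)$, whose left endpoint $s:=\tau+m$ satisfies $s\ge m$. From this coupling I extract two estimates. (i) The interval has length $m^2$ and left endpoint $\ge m$, and the thresholds grow so fast that no interval $[s,s+m^2)$ with $s\ge m$ can cover two of them; hence one cycle meets at most one full block and collects $\theta^N$-weight at most $1/N$, up to a negligible term. (ii) To collect any positive weight from $B_k$ the interval must meet $[T_{k-1},T_k)$, forcing $m\le s<T_k$, so (as $q$ is decreasing) the cycle's quitting probability satisfies $q_m\ge q_{T_k}\gtrsim k^{-1/2}$.

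To combine these, let the cycles that collect positive weight touch the blocks $k_1<k_2<\dots$ (distinct, since later cycles sit at later stages, and cycles revisiting a block may be merged for an upper bound). Collecting in the cycle that touches $k_i$ requires surviving the quits of all earlier collecting cycles, so its expected contribution is at most $\big(\prod_{l\le i}(1-q_{T_{k_l}})\big)/N\le \exp\!\big(-c\sum_{l\le i}k_l^{-1/2}\big)/N$. As the $k_l$ are distinct integers in $\{1,\dots,N\}$, one has $\sum_{l\le i}k_l^{-1/2}\ge i/\sqrt N$, whence
\[ v_{\theta^N}(0,1)\ \le\ \frac{\kappa}{N}\sum_{i\ge 1}e^{-c\,i/\sqrt{N}}\ =\ O\!\Big(\tfrac{1}{\sqrt N}\Big)\ \xrightarrow[N\to\infty]{}\ 0. \]

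The main obstacle is to make estimates (i) and (ii) rigorous for arbitrary behavioral strategies rather than the pure ``threshold'' strategies suggested above. The delicate point is the stage–state coupling $s\ge m$, which forbids reaching a late block cheaply through a high state and thereby underlies the risk lower bound (ii); one must also organize the survival probabilities correctly once the quit states are random. I would handle this by decomposing the expected $\theta^N$-payoff of a general strategy as a sum over reward excursions weighted by their survival probabilities, and applying (i)--(ii) term by term inside the expectation.
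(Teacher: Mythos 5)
You are correct, and your construction and argument are essentially the same as the paper's, organized with different bookkeeping. The paper also uses $N$ consecutive blocks of equal total weight $1/N$ whose lengths explode (there block $r$ has length $N^{3^{r+1}}$, so the blocks depend on $N$, whereas your thresholds $T_k=\lceil\exp\exp\exp(\sqrt k)\rceil$ are fixed once and for all, a mildly cleaner choice), and it rests on the same two estimates: a single payoff excursion cannot harvest more than one block's weight plus a negligible remainder, and any quit made early enough to harvest weight carries absorption risk that is only polynomially small in $N$ (the paper uses the uniform bound $N^{-1/4}$ valid over the whole horizon; you use $c\,k^{-1/2}$ for block $k$ and then coarsen to $c\,N^{-1/2}$ --- the same mechanism). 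Where you differ is in assembling the final bound: the paper classifies each block according to whether the player quits inside it (blocks without a quit yield at most $N^{-2}$, because the excursion feeding them started earlier and is too short; blocks with a quit each cost absorption risk $\geq N^{-1/4}$), and then splits on whether the survival probability at the terminal stage exceeds $N^{-1}$, obtaining $v_{\theta^N}(0,1)\leq 2N^{-1}+N^{-1/2}$; you decompose excursion by excursion and sum a geometric-type series, obtaining the same $O(N^{-1/2})$. Two remarks on your acknowledged soft spots, both fixable with what you already have. First, the distinctness/merging of the $k_i$ is cleanest if you group the payoff by block rather than by cycle: the expected weight harvested from $B_k$ is at most $1/N$ times the survival probability at the first excursion meeting $B_k$, and that excursion is preceded by at least half as many quits as there are previously touched blocks (an excursion can straddle at most two blocks), each quit having risk $\geq c\,N^{-1/2}$; summing over touched blocks gives exactly your series. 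Second, the reduction needed for a clean trajectory decomposition is standard here: in an MDP the value is a supremum of averages of pure-strategy payoffs, so one may take the strategy pure, and conditional on non-absorption a pure strategy's trajectory in this game is deterministic, which makes your cycle decomposition exact; the paper's case analysis implicitly relies on the same reduction.
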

\begin{remark}
This result answers negatively the open question mentioned in Renault \cite[Section 5]{R14}: indeed, the total variation of $\theta^N$ vanishes as $N$ goes to infinity, but $(v_{\theta^N})$ does not converge to the asymptotic value. 
\end{remark}
\begin{proof}
For $N \in \m{N}^*$ and $r \in \left\{1,...,N+1 \right\}$, define 
$\displaystyle m^N_r:=\sum_{r'=1}^{r} N^{3^{r'}}-N^{3}+1$.
Consider $\theta^N \in \Delta(\m{N}^*)$ defined by 
\begin{equation*}
\theta^N_m:= \left\{
\begin{array}{ll}
\displaystyle N^{-3^{r+1}-1} & \mbox{if} \ \ r \in \left\{1,...,N\right\} \ \text{and} \ m^N_r \leq m <m^N_{r+1}, \\
0 & \mbox{if} \ \ m \geq m^N_{N+1}.
\end{array}
\right.
\end{equation*}
Consider $\Gamma_{\theta^{N}}(0,1)$, and fix a strategy for the decision-maker. Let $r \in \left\{1,2,...,N\right\}$. We distinguish between the two following cases:
\setcounter{case}{0}
\begin{case}{The decision-maker does not play $Q$ in a state $(0,m) \in \left\{0 \right\} \times \m{N}^*$ between stages $m_r$ and $m_{r+1}-1$}
\end{case}
In this case, the only way that his total payoff between stages $m_r$ and $m_{r+1}-1$ is positive is that $k_{m_r}=(1,\tilde{m})$, for some $\tilde{m} \in \m{N}^*$. By definition of the transition, we have
\begin{equation*}
\tilde{m} \leq \left(\sum_{r'=1}^r N^{3^{r'}}\right)^2 \leq N^{3^{r+1}-1}.
\end{equation*}
We deduce that the total payoff between stages $m_r$ and $m_{r+1}-1$ is smaller than 
\\
$\tilde{m} N^{-3^{r+1}-1} \leq (1/N)^2$. 
\begin{case}{The decision-maker plays $Q$ in a state $(0,m) \in \left\{0 \right\} \times \m{N}^*$ between stages $m_r$ and $m_{r+1}-1$}
\end{case}
By definition of the transition, we have $m \leq \sum_{r'=1}^{r} N^{3^{r'}} \leq N^{3^{r+1}} \leq N^{3^{N+1}}$, thus 
\\
$1/\ln(\ln(\ln(m+M_0))) \geq (10\ln(N))^{-1} \geq N^{-1/4}$. 
\\
\\
Let us now finish the proof. Assume $N \geq 10$. For $m \in \m{N}^*$, let $p(m)$ be the probability that the state is $0^*$ at stage $m$. Assume that under the strategy of the decision-maker, $p(m^N_{N+1}) \geq N^{-1}$. 
Because $(1-N^{-1/4})^{\sqrt{N}} <N^{-1}$, this implies that the number of $r$ that satisfy Case 2 is smaller than $\sqrt{N}$. Thus, the total payoff in $\Gamma_{\theta}(0,1)$ is smaller than $N^{-1}+N^{-1/2}$.
\\
\\
Assume now that under the strategy of the decision-maker, $p(m^N_{N+1}) < N^{-1}$.
Let $m$ be the first stage such that $p(m) \leq N^{-1}$. Because $(1-N^{-1/4})^{\sqrt{N}}<N^{-1}$, the number of $r$ that satisfy Case 2 and such that $m_r \leq m$ is smaller than $\sqrt{N}$. Like before, we deduce that the total payoff between stages 1 and $m$ is smaller than $N^{-1}+N^{-1/2}$, and afterwards it is smaller than $N^{-1}$ by definition of $m$. This proves that $\lim_{N \rightarrow+\infty} v_{\theta^N}(0,1)=0$.   
\end{proof}
\section*{Acknowledgments}
The author is very grateful to Miquel Oliu-Barton, J\'er\^ome Renault, Sylvain Sorin and Guillaume Vigeral for helpful discussions.
\bibliography{bibliogen}
\end{document}